\documentclass[12pt,a4paper,reqno]{amsart}
\usepackage[a4paper,top=3cm,bottom=3cm,inner=3cm,outer=3cm,includehead,includefoot]{geometry}
\usepackage{amsmath,amssymb,amsthm,bbm,mathtools,wasysym,calc,verbatim,enumitem,tikz,pgfplots,url,hyperref,mathrsfs,cite,fullpage}
\usetikzlibrary{shapes.misc,calc,intersections,patterns,decorations.pathreplacing,backgrounds}
\hypersetup{colorlinks=true,citecolor=blue,linktoc=page}

\linespread{1.2}
\addtolength{\footskip}{\baselineskip/2}

\newtheorem{theorem}{Theorem}
\newtheorem{lemma}[theorem]{Lemma}
\newtheorem{prop}[theorem]{Proposition}
\newtheorem{claim}[theorem]{Claim}
\newtheorem{corollary}[theorem]{Corollary}

\theoremstyle{definition}
\newtheorem{definition}[theorem]{Definition}

\newtheorem*{HistRmk}{Historical Remark}

\newenvironment{clmproof}[1]{\begin{proof}[Proof of Claim~\ref{#1}]\let\qednow\qedsymbol\renewcommand{\qedsymbol}{}}{\; \qednow \end{proof}}

\numberwithin{theorem}{section}
\setcounter{tocdepth}{1}

\setlist[itemize]{leftmargin=1cm}
\setlist[enumerate]{leftmargin=1cm}

\everydisplay{\thickmuskip=7mu}
\medmuskip=4mu plus 2mu minus 2mu

\renewcommand{\leq}{\leqslant}
\renewcommand{\geq}{\geqslant}
\renewcommand{\le}{\leqslant}
\renewcommand{\ge}{\geqslant}
\renewcommand{\to}{\rightarrow}
\newcommand{\interior}{\operatorname{int}}
\newcommand{\conv}{\operatorname{conv}}

\let\eps\varepsilon

\def\Ex{\mathbb{E}}

\def\F{\mathcal{F}}

\def\H{\mathbb{H}}
\def\HH{\mathcal{H}}

\def\N{\mathbb{N}}
\def\cN{\mathcal{N}}
\def\O{\mathcal{O}}
\def\P{\mathbb{P}}

\def\Pr{\mathbb{P}}

\def\QQ{\mathcal{Q}}
\def\R{\mathbb{R}}

\def\S{\mathcal{S}}
\def\SS{\mathbf{S}}
\def\T{\mathcal{T}}
\def\U{\mathcal{U}}

\def\X{\mathcal{X}}
\def\Y{\mathcal{Y}}
\def\Z{\mathbb{Z}}

\def\<{\langle}
\def\>{\rangle}
\def\0{\mathbf{0}}
\def\1{\mathbbm{1}}

\def\edge{\partial}
\def\str{\S^*}
\def\Vor{\mathrm{Vor}}
\def\Cell{\mathrm{Cell}}

\title{Subcritical monotone cellular automata}

\author[P. Balister \and B. Bollob\'as \and R. Morris \and P.J. Smith]{Paul Balister \and B\'ela Bollob\'as \and Robert Morris \and Paul Smith}

\address{Mathematical Institute, University of Oxford, Radcliffe Observatory Quarter, Woodstock Road, Oxford, OX2 6GG, UK}\email{Paul.Balister@maths.ox.ac.uk}

\address{Department of Pure Mathematics and Mathematical Statistics, Wilberforce Road, Cambridge, CB3 0WA, UK, and Department of Mathematical Sciences, University of Memphis, Memphis, TN 38152, USA}
\email{b.bollobas@dpmms.cam.ac.uk}

\address{IMPA, Estrada Dona Castorina 110, Jardim Bot\^anico, Rio de Janeiro, 22460-320, Brazil}
\email{rob@impa.br}

\address{Clerkenwell, London}
\email{paulsmith@cantab.net}
\thanks{P.B.\ and B.B.\ were partially supported by NSF grant DMS~1855745, 
R.M.\ by FAPERJ (Proc.~E-26/202.993/2017) and CNPq (Proc.~304237/2016-7), and by the ERC Starting Grant 680275 MALIG, and P.S. by Israel Science Foundation grant 1147/14 and by a CNPq bolsa PDJ} 


\begin{document}

\begin{abstract}
We study monotone cellular automata (also known as $\U$-bootstrap percolation) in $\Z^d$ with random initial configurations. Confirming a conjecture of Balister, Bollob\'as, Przykucki and Smith, who proved the corresponding result in two dimensions, we show that the critical probability is non-zero for all subcritical models.
\end{abstract}

\maketitle

\section{Introduction}

The study of bootstrap percolation, which may be thought of as a monotone version of the Glauber dynamics of the Ising model, was initiated in 1979 by Chalupa, Leath and Reich~\cite{CLR}. One of the most important early results was obtained by
Schonmann~\cite{Sch1}, who proved\footnote{In the case $d = 2$, this result was obtained several years earlier, by van Enter~\cite{vE}.} that the critical probability $p_c(\Z^d,r)$ of the $r$-neighbour model on $\Z^d$ (see below) satisfies
\[
p_c(\Z^d,r) =
\begin{cases}
0 & \text{if } r \leq d, \text{ and} \\
1 & \text{otherwise.}
\end{cases}
\]
In this paper we study the corresponding problem in a vastly more general setting, whose study was initiated in 2015 by Bollob\'as, Smith and Uzzell~\cite{BSU}.

\begin{definition}
Let $\U = \{X_1,\dots,X_m\}$ be an arbitrary finite collection of finite, non-empty subsets of $\Z^d \setminus \{\0\}$. Now, given a set $A \subset \Z^d$ of initially \emph{infected} sites, set $A_0 = A$, and define for each $t \in \N$ the set $A_t$ of sites infected at time $t$ by
\[
A_t = A_{t-1} \cup \big\{ x \in \Z^d : x + X \subset A_{t-1} \text{ for some } X \in \U \big\}.
\]
The \emph{$\U$-closure} of $A$ is the set $[A]_\U := \bigcup_{t \geq 0} A_t$ of all eventually-infected sites, and we say that $A$ \emph{percolates} if all sites are eventually infected; that is, if $[A]_\U = \Z^d$.
\end{definition}

We call\/ $\U$ the \emph{update family} of the process, each $X \in \U$ an \emph{update rule}, and the process itself\/ \emph{$\U$-bootstrap percolation}. Thus, according to the definition, a site $x$ becomes infected in a given step if the translate by $x$ of one of the sets of the update family is already entirely infected, and infected sites remain infected forever. For example, the classical $r$-neighbour model on $\Z^d$, mentioned above, is defined as the process in which a site becomes infected if at least $r$ of its neighbours is infected, and its update family $\cN_r^d$ consists of all $\binom{2d}{r}$ subsets of size $r$ of the $2d$ nearest neighbours of the origin.

We are interested in the behaviour of the $\U$-bootstrap process when the initial set of infected sites $A$ is chosen randomly. Let us say that a set $A \subset \Z^d$ is \emph{$p$-random} if each of the sites of $\Z^d$ is included in $A$
independently with probability~$p$, write $\P_p$ for the corresponding probability measure, and define the \emph{critical probability} to be\footnote{One can show using the $0$-$1$ law for translation-invariant events that the probability $A$ percolates is either 0 or 1, so the constant $1/2$ in the definition is not important.}
\begin{equation}\label{def:pc}
p_c(\Z^d,\U) := \inf \big\{ p \,:\, \P_p\big( [A]_\U = \Z^d \big) \geq 1/2 \big\}.
\end{equation}

One of the key insights from~\cite{BSU} was that, at least in two dimensions, the rough global behaviour of the $\U$-bootstrap process depends only on the action of the process on discrete half-spaces. In order to make this statement precise, let $\SS^{d-1}$ be the unit sphere in $\R^d$, and for each $u \in \SS^{d-1}$ let us write
\[
\H_u^d := \big\{ x \in \Z^d \,:\, \<x,u\> < 0 \big\}
\]
for the discrete half-space in $\Z^d$ with normal $u \in \SS^{d-1}$. Now, given a $d$-dimensional update family~$\U$, define
\[
\S = \S(\U) := \big\{ u\in \SS^{d-1} \,:\, [\H_u^d]_\U = \H_u^d \big\}
\]
to be the set of \emph{stable} directions, and note that $u$ is unstable if and only if $X \subset \H_u^d$ for some $X \in \U$. It is moreover easy to show that if $u$ is unstable then $[\H_u^d]_\U = \Z^d$.

The following definition was introduced by Bollob\'as, Smith and Uzzell~\cite{BSU} (when $d = 2$) and by Balister, Bollob\'as, Przykucki and Smith~\cite{BBPS} (for $d > 2$). Given a set $\T \subset \SS^{d-1}$, let $\interior(\T)$ denote the interior of $\T$ in the usual topology on the sphere $\SS^{d-1}$.

\begin{definition}\label{def:subcritical}
A $d$-dimensional update family is \emph{subcritical} if
\[
\interior(H\cap\S) \ne \emptyset
\]
for every hemisphere $H \subset \SS^{d-1}$.
\end{definition}

For example, the stable set of the $r$-neighbour model on $\Z^d$ has empty interior if $r \le d$, and is equal to $\SS^{d-1}$ otherwise, and is therefore subcritical if and only if $r > d$.

The following theorem was conjectured by Bollob\'as, Smith and Uzzell~\cite{BSU}, and proved by Bollob\'as, Smith and Uzzell~\cite{BSU} (for non-subcritical families) and by Balister, Bollob\'as, Przykucki and Smith~\cite{BBPS} (for subcritical families).

\begin{theorem}\label{thm:BBPS}
Let\/ $\U$ be a two-dimensional update family. Then
\[
p_c(\Z^2,\U) > 0 \qquad \Leftrightarrow \qquad \U \text{ is subcritical.}
\]
\end{theorem}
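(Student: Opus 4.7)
\emph{Direction $(\Leftarrow)$: non-subcritical $\Rightarrow p_c = 0$.}
If $\U$ is not subcritical, some open hemisphere $H\subset\SS^1$ contains no open arc of stable directions, so unstable directions are dense in a sub-arc of~$H$; each unstable $u$ there yields a rule $X\in\U$ with $X\subseteq\H_u^2$, allowing the infection to push past a flat boundary with normal~$u$. I would then run the Bollob\'as--Smith--Uzzell droplet-growth renormalization: show that under the $p$-random dynamics an $(N\times N)$ rectangle of infections grows to a $(2N\times 2N)$ rectangle with probability $1-o(1)$ for any $p\ge N^{-c}$ with an appropriate $c=c(\U)>0$, and iterate through doubling scales to obtain percolation at any fixed $p>0$, giving $p_c(\Z^2,\U)=0$.

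\emph{Direction $(\Rightarrow)$: subcritical $\Rightarrow p_c > 0$.}
The plan is to show that for small $p$ the origin has positive probability of never being infected. \emph{Step 1 (stable droplets):} Subcriticality ensures $\interior(\S)$ meets every open hemisphere of~$\SS^1$, so compactness yields finitely many stable directions $u_1,\dots,u_k\in\S$ with $\0\in\interior\,\conv\{u_1,\dots,u_k\}$. Using these as inward normals, I would construct bounded convex polygonal ``droplets'' $D$ around the origin and show that if all edges of $D$ are sufficiently long, then $[\Z^2\setminus D]_\U = \Z^2\setminus D$. The local reason: for $x\in\partial D$ on an edge with inward normal $u_i$ and far from corners, the condition $x+X\subseteq\Z^2\setminus D$ reduces to $X\subseteq\H_{u_i}^2$, which is exactly what stability of $u_i$ forbids; corner sites are handled either by imposing a minimum edge length, or by choosing the $u_i$ finely enough that no rule $X$ fits into the wedge cut off by two adjacent edges. \emph{Step 2 (protection of~$\0$):} Partition $\Z^2$ into boxes of side length $L$, chosen large enough to assemble a droplet from them; call a box ``clean'' if it is disjoint from~$A$, and observe that clean boxes form a $q$-dependent site-percolation field with $q=(1-p)^{L^2}\to 1$ as $p\to 0$. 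A Peierls/contour argument then produces, with positive probability, an enclosing clean stable droplet around~$\0$; by Step~1 the origin is never infected, giving $\P_p([A]_\U = \Z^2) < 1$ and hence $p_c(\Z^2,\U) > 0$.

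\emph{Main obstacle.} The substantive work lies in Step~1. Stability of each $u_i$ is a single-half-plane condition, whereas droplets are intersections of half-planes and update rules can straddle several edges near corners of~$D$. Establishing that sufficiently large polygons with stable inward normals are genuinely invariant under the full bootstrap process, uniformly across all subcritical~$\U$, is the geometric/combinatorial heart of the subcritical analysis and distinguishes it from the arguments used in the non-subcritical case of~\cite{BSU}. Step~2 is a more standard renormalization, but must be calibrated to the droplet sizes produced by Step~1.
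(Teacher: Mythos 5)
The paper does not itself prove Theorem~\ref{thm:BBPS}: it is quoted from Bollob\'as--Smith--Uzzell~\cite{BSU} (the $\Leftarrow$ direction, that non-subcritical families have $p_c=0$) and from Balister--Bollob\'as--Przykucki--Smith~\cite{BBPS} (the $\Rightarrow$ direction), and the present paper proves the $d$-dimensional generalisation (Theorem~\ref{thm:subcrit}) of the harder $\Rightarrow$ direction by a related, streamlined argument. Your sketch of the $\Leftarrow$ direction is in the spirit of~\cite{BSU}; the substantive comments below concern your Step~1 for the $\Rightarrow$ direction, which contains a gap that is not a matter of polishing corners but kills the whole plan.

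The key claim of your Step~1 --- that for a bounded convex polygon $D$ with stable inward normals and long edges one has $[\Z^2\setminus D]_\U=\Z^2\setminus D$ --- is false for most subcritical families, for a reason independent of all the corner technicalities you flag. If $\U$ has even one unstable direction $v$ (equivalently, $X\subset\H_v^2$ for some rule $X$), and $D$ is any bounded set, let $x$ be a point of $D_\Z$ minimising $\<x,v\>$. Then every $y\in X$ has $\<y,v\><0$, so every $x+y$ has $\<x+y,v\><\<x,v\>$ and hence lies in $\Z^2\setminus D$; thus $x$ is infected and $\Z^2\setminus D$ is \emph{not} $\U$-closed. A concrete instance: directed bootstrap percolation $\U=\{\{(1,0),(0,1)\}\}$ is subcritical (its stable set is the closed arc of length $3\pi/2$ avoiding the third quadrant), yet for \emph{any} bounded $D$ the lattice point of $D$ maximising $x_1+x_2$ is infected from outside. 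The corners you worry about are therefore not fixable by minimum edge length or fine subdivision; the problem is that the complement of a convex body is a \emph{union} of half-planes, and unions of stable half-planes are not stable --- a rule can straddle two of them. Your Step~2 inherits this: the event you construct (a clean enclosing droplet) simply does not protect the origin.

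The argument that does work, used in~\cite{BBPS} and generalised in this paper, is dual to what you propose. Rather than surrounding the origin by a region whose \emph{complement} is closed, one covers the random set $A$ (more precisely, each ``bad'' region at each scale) by bounded sets whose \emph{interiors} are $\U$-closed: these are \emph{intersections} $\bigcap_u\Xi_u$ of half-spaces with stable normals (Definition~\ref{def:cover}, Lemma~\ref{lem:covers-props}), and an intersection of $\U$-closed sets is automatically $\U$-closed, so no wedge issue ever arises. Moreover, each face is a perturbed half-plane (a ``pinch'', Definition~\ref{def:pinch}) rather than a flat one, so the covers can be routed around nearby bad regions; this requires normals that are not merely stable but \emph{strongly stable} (in $\interior(\S)$) and also avoid the set $F(\U)$ of directions perpendicular to a difference of two points in a rule (Lemma~\ref{lem:stronglyexists}, the condition~\eqref{eq:subcrit-eps}, and the proof of Lemma~\ref{lem:range-closed}) --- neither of which appears in your sketch, and both of which are essential even in two dimensions. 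Finally, the multi-scale good/bad hierarchy of Section~\ref{sec:cubes} replaces your Peierls step: one shows that the union $\T$ of all covers is $\U$-closed and contains $A$ almost surely, and then bounds $\P_p(\0\in\T)$, rather than exhibiting a single clean barrier. If you want to repair your sketch, the most economical fix is to switch to the ``cover the infections'' viewpoint; protecting the origin by a stable outer barrier cannot be made to work once $\S\ne\SS^1$.
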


Balister, Bollob\'as, Przykucki and Smith~\cite{BBPS} moreover conjectured that the corresponding statement also holds for all $d > 2$. The main aim of this paper is to prove the following theorem, which confirms one direction of this conjecture. We remark that an alternative (very different) proof of this theorem 
has recently been given by Hartarsky and Szab\'o~\cite{HS22}. 

\begin{theorem}\label{thm:subcrit}
Let\/ $\U$ be a subcritical $d$-dimensional update family. Then
\[
p_c(\Z^d,\U) > 0.
\]
\end{theorem}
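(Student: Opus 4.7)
The plan is to exploit the subcritical hypothesis to construct bounded $\U$-closed convex polytopes — which I shall call \emph{droplets} — and then to run a multi-scale renormalization argument showing that, for small $p$, the $\U$-closure of a $p$-random set misses the origin with positive probability; by translation invariance this gives $p_c(\Z^d,\U) > 0$. To construct the droplets, Definition~\ref{def:subcritical} combined with compactness of $\SS^{d-1}$ should yield a finite family $u_1,\ldots,u_k \in \interior(\S)$ with $\0 \in \interior(\conv\{u_1,\ldots,u_k\})$: for each $w \in \SS^{d-1}$, subcriticality supplies a $v_w \in \interior(\S)$ with $\<v_w,w\> > 0$; the open sets $\{w' : \<v_w,w'\> > 0\}$ then cover $\SS^{d-1}$, and a finite subcover supplies the $u_j$. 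For each $\mathbf{a} = (a_1,\ldots,a_k) \in \R^k$ I would then define
\[
D(\mathbf{a}) \;:=\; \bigcap_{i=1}^{k} \big\{ x \in \Z^d : \<x,u_i\> \leq a_i \big\},
\]
which is bounded precisely because the $u_i$ positively span $\R^d$.

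The central geometric lemma is that $[D(\mathbf{a})]_\U = D(\mathbf{a})$ for every $\mathbf{a}$. Indeed, if $x \notin D(\mathbf{a})$ then $\<x,u_{i_0}\> > a_{i_0}$ for some $i_0$, and for any $X \in \U$ stability of $u_{i_0}$ forces $X \not\subset \H_{u_{i_0}}^d$, so some $y \in X$ satisfies $\<y,u_{i_0}\> \geq 0$; then $\<x+y,u_{i_0}\> > a_{i_0}$, so $x+y \notin D(\mathbf{a})$ and hence $x+X \not\subset D(\mathbf{a})$. Setting $R := \max_{X \in \U} \operatorname{diam}(X)$, the same argument shows that any union $\bigcup_i D_i$ of droplets whose pairwise distances all exceed $R$ is also $\U$-closed, since each translate $x+X$ then has diameter at most $R$ and meets at most one $D_i$.

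The remainder of the argument is probabilistic and should follow the blueprint of~\cite{BBPS}. I would cover the $p$-random set $A$ by a bottom-up hierarchy of droplets: at scale~$0$ each point of $A$ sits in its own microscopic droplet, and at scale $\ell+1$ any two scale-$\ell$ droplets that come within distance $R$ are merged into a single enclosing droplet $D(\mathbf{a})$ by enlarging the parameters. By the closure lemma the union of droplets is $\U$-closed at every scale, so $[A]_\U$ stays inside the final union throughout the construction, and it suffices to show that for $p$ small the origin avoids this union with positive probability. This is a Peierls / Aizenman--Lebowitz-type analysis: the number of possible droplet shapes and positions at scale $\ell$ grows only combinatorially in $\ell$, while the probability that any given droplet at scale $\ell$ is actually formed must decay much faster in $p$; the interior condition $u_i \in \interior(\S)$ is what supplies the slack needed to absorb boundary-effect corrections when merging. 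I expect this high-dimensional multi-scale accounting — sums over droplet shapes, positions and merging histories in arbitrary dimension $d$ — to be the principal obstacle, generalising the most delicate ingredient of the two-dimensional argument of~\cite{BBPS}.
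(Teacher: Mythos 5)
Your droplet lemma and the union-of-separated-droplets lemma are both correct, and they are the same kind of $\U$-closed blocking sets the paper builds; your compactness argument for extracting finitely many strongly stable directions that positively span $\R^d$ parallels Lemma~\ref{lem:stronglyexists}. The gap is exactly where you flag it: the bottom-up droplet-merging hierarchy from~\cite{BBPS}. The paper explicitly does not attempt to generalise that hierarchy --- it says the ``additional complexities of high-dimensional geometry'' made a new method necessary --- and the combinatorial accounting of chained merges (shapes, positions, histories) in dimension $d \geq 3$ is genuinely unresolved in your proposal; it is not a matter of carrying over the two-dimensional perimeter bookkeeping.

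The paper instead uses a \emph{fixed} renormalisation hierarchy of good and bad cubes (Definition~\ref{def:goodcubes}): a $(k)$-cube is bad only if it sees two independent bad $(k-1)$-cubes nearby, so badness probabilities decay superpolynomially in the scale (Lemma~\ref{lem:probibad}), and the probabilistic step collapses to a one-line union bound with no sum over merging histories at all. The price is geometric rather than combinatorial: the cover $T_k(Q)$ of a cluster $Q$ of bad $(k)$-cubes must keep \emph{every} bad cube of scale $i \le k$ other than $Q$ itself well away from its boundary (Definition~\ref{def:cover}), since otherwise covers built at different scales would neither nest nor be well-separated and the union $\T$ in~\eqref{def:T} would fail to be $\U$-closed. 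Flat polytopal faces like your $D(\mathbf{a})$ cannot achieve this, because a bad $(i)$-cube for $i < k$ can sit anywhere near the boundary. The paper therefore replaces flat hyperplanes with ``pinched'' surfaces (Definition~\ref{def:pinch}), graphs over $\{u\}^\perp$ with tiny Lipschitz constant that are perturbed at each scale to dodge bad cubes (Lemma~\ref{lem:construct-pinch}). Proving that a pinched range is still $\U$-closed (Lemma~\ref{lem:range-closed}) then requires more from the normal directions than your flat droplets do: they must lie in $\interior(\S) \setminus F(\U)$ rather than just $\interior(\S)$, because a perturbed face is no longer a half-space, and a non-destabilising rule (one with $\0 \in \conv(X)$) could otherwise nestle into a valley of the surface. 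In short, you have the correct closed sets, but not the machinery --- pinch, cube hierarchy, cover nesting --- that the paper introduces precisely to avoid the high-dimensional merging analysis you identify as the hard step.
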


For $d$-dimensional update families that are not subcritical, the behaviour of the $\U$-bootstrap process is quite different, and controlling the growth of the infected set requires an essentially disjoint set of tools and techniques. For these models, the following much more precise `universality' conjecture was proposed by Bollob\'as, Duminil-Copin, Morris and Smith~\cite{BDMS}, and proved in~\cite{BBMSlower,BBMSupper} (the special case $d = 2$ was proved earlier by Bollob\'as, Smith and Uzzell~\cite{BSU}). Let $\log_{(r)}$ denote the $r$-times iterated logarithm, so $\log_{(0)} n = n$ and $\log_{(r)}n = \log \log_{(r-1)}n$ for each $r \geq 1$. 

\begin{theorem}\label{conj:highd}
Let\/ $\U$ be a $d$-dimensional update family. If\/ $\U$ is not subcritical, then $p_c(\Z^d,\U) = 0$, and moreover\/\footnote{Here $p_c(\Z_n^d,\U)$ is defined as in~\eqref{def:pc}, replacing $\Z^d$ by $\Z_n^d$.}
\[
p_c(\Z_n^d,\U) = \bigg( \frac{1}{\log_{(r-1)}n} \bigg)^{\Theta(1)}
\]
for some $r \in \{1,\dots,d\}$. 
\end{theorem}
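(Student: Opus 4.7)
The plan is to prove Theorem~\ref{conj:highd} by establishing matching upper and lower bounds on $p_c(\Z_n^d,\U)$ of order $(\log_{(r-1)} n)^{-\Theta(1)}$, preceded by distilling the correct combinatorial invariant $r$. Non-subcriticality gives an open hemisphere $H \subset \SS^{d-1}$ with $\interior(H \cap \S) = \emptyset$, so the unstable directions are dense in $H$, and one should therefore be able to propagate a bounded, structured seed without limit along those directions. I would let $r$ be the depth of the shortest hierarchical growth construction taking a small $p$-random seed to a fully infected configuration: each level corresponds to an Aizenman--Lebowitz-type nucleation-and-growth step that fills an additional direction using the $p$-random background. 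For the $r'$-neighbour model on $\Z^d$ with $r' \leq d$ this procedure recovers $r = r'$; trivially $1 \leq r \leq d$ in general.

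For the upper bound I would proceed inductively on the hierarchy. At the base, any $p$-random window of side $L_1 \geq \exp(c_1/p)$ contains, with probability $1-o(1)$, a nucleating seed that triggers level-$1$ growth along one direction. At each subsequent level $i > 1$, fully-infected droplets at scale $L_{i-1}$ serve as effective seeds for level-$i$ growth in a new direction, with the density of such seeds exponentially smaller at each level; this yields $L_i \approx \exp(L_{i-1}^{O(1)})$. After $r-1$ such promotions one reaches $L_r = n$ precisely when $p \geq (\log_{(r-1)} n)^{-c}$, giving the upper bound. The qualitative statement $p_c(\Z^d,\U) = 0$ follows immediately, since for any $p > 0$ one can take $n$ large enough that the upper bound holds.

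For the lower bound I would run a matching Peierls / Aizenman--Lebowitz-type argument in reverse: whenever the torus becomes entirely infected, an exploration of the infection history produces a nested \emph{witness} family of droplets at scales $L_1 \ll L_2 \ll \cdots \ll L_r$, each carrying a bounded-complexity growth certificate. The probability that a $p$-random configuration realises a given witness decays exponentially in its number of required seeds, and a union bound over all possible witnesses shows this probability tends to $0$ as long as $p \leq (\log_{(r-1)} n)^{-c}$. The main obstacle is the correct geometric and combinatorial definition of $r$, together with a proof that the inductive droplet construction is optimal across all possible intermediate polytope shapes: in higher dimensions many exotic growth mechanisms could conceivably shortcut the hierarchy, and either ruling them out or folding them into the invariant $r$ is the technical heart of the argument---indeed it is handled in the literature by the two substantial separate papers \cite{BBMSlower,BBMSupper}.
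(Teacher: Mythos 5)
This theorem is not proved in the paper you were given: it is quoted as a known result, attributed explicitly to~\cite{BDMS} (the conjecture) and to~\cite{BBMSlower,BBMSupper} (the proof, with the $d=2$ case from~\cite{BSU}). So there is no in-paper proof to compare your attempt against; the paper only uses the statement as a black box to deduce Corollary~\ref{conj:BBPS}.

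Taken on its own terms, your outline does capture the correct \emph{shape} of the argument used in the literature --- a nucleation-and-growth hierarchy of scales $L_1 \ll \dots \ll L_r$ for the upper bound, and an Aizenman--Lebowitz / Peierls witness argument for the lower bound --- but it has a genuine gap at its foundation: the invariant $r$ is never actually defined. ``Let $r$ be the depth of the shortest hierarchical growth construction'' is circular as a definition, since the hierarchy is itself what one is trying to construct and bound, and it gives no way to read $r$ off the update family $\U$. In the actual universality theory, $r$ is a concrete combinatorial quantity determined by the geometry of the stable set $\S(\U)$ (defined recursively via the ``difficulty'' of directions and of lower-dimensional induced families), and making that definition precise, proving it is well-defined, and showing it is simultaneously achievable (upper bound) and unavoidable (lower bound) is precisely the content of the two substantial papers~\cite{BBMSlower,BBMSupper}. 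You acknowledge this yourself in your final sentence, which is honest, but it means the proposal is a research program rather than a proof: every step after ``let $r$ be\dots'' depends on a definition you have not supplied, and the claimed recurrences $L_i \approx \exp(L_{i-1}^{O(1)})$ and the claimed exponential decay of witness probabilities are asserted, not derived.

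Under no interpretation is this a correct blind proof of the statement; it is a correct high-level description of where the proof lives and roughly what it looks like, with the technical heart explicitly deferred.
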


Combining Theorems~\ref{thm:subcrit} and~\ref{conj:highd}, we obtain the following corollary, which confirms the conjecture of Balister, Bollob\'as, Przykucki and Smith~\cite{BBPS}.

\begin{corollary}\label{conj:BBPS}
Let\/ $\U$ be a $d$-dimensional update family. Then
\[
p_c(\Z^d,\U) > 0 \qquad \Leftrightarrow \qquad \U \text{ is subcritical.}
\]
\end{corollary}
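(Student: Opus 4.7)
The plan is to obtain Corollary~\ref{conj:BBPS} directly from Theorems~\ref{thm:subcrit} and~\ref{conj:highd}, which between them cover the two directions of the biconditional; no additional work is required beyond quoting these results.

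For the implication ``$\U$ subcritical $\Rightarrow p_c(\Z^d,\U) > 0$'', I would simply apply Theorem~\ref{thm:subcrit}, the main result of the present paper. For the converse, I would argue by contrapositive: if $\U$ is not subcritical, then Theorem~\ref{conj:highd} (proved for $d = 2$ in~\cite{BSU} and in general dimension in~\cite{BBMSlower,BBMSupper}) asserts in particular that $p_c(\Z^d,\U) = 0$, so that $p_c(\Z^d,\U)$ is not strictly positive. Note that I use only the qualitative conclusion $p_c(\Z^d,\U) = 0$, not the sharper scaling information about $p_c(\Z_n^d,\U)$ on the torus contained in the second half of Theorem~\ref{conj:highd}.

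There is no real obstacle here, since all of the substantive content is packaged into the two cited theorems, and Corollary~\ref{conj:BBPS} functions as a bookkeeping statement that records the combined picture. The only step that required genuinely new ideas is Theorem~\ref{thm:subcrit} itself, whose proof occupies the body of the paper.
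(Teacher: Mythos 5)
Your proposal is correct and matches the paper's own derivation exactly: the corollary is obtained by combining Theorem~\ref{thm:subcrit} for the forward implication with the qualitative part of Theorem~\ref{conj:highd} for the contrapositive of the converse. Nothing further is needed.
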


We remark that moreover $p_c(\Z^d,\U) = 1$ if and only if $\S(\U) = \SS^{d-1}$. The proof of this assertion uses a technical lemma from~\cite{BBMSupper}; we provide the details in Section~\ref{sec:pc=1}. 

The non-triviality of $p_c(\Z^d,\U)$ for subcritical update families $\U$ means that one can ask of such models questions that would more typically be associated with (classical) percolation, including those concerning behaviour at criticality, the probabilities of one-arm events below criticality, and noise sensitivity. A number of such questions were asked of two-dimensional models in~\cite{BBPS}, and solutions to several of them were subsequently obtained by Hartarsky~\cite{Hart}. In dimensions $d \geq 3$, all such questions remain open. Since the questions are essentially the same in all dimensions, we do not repeat them here, but instead refer the reader to~\cite{BBPS}.

The proof of Theorem~\ref{thm:subcrit}, like the proof in~\cite{BBPS}, uses multi-scale analysis, and our main challenge will be to define suitable high-dimensional `covers' of our (random) set of infected sites. In order to handle the additional
complexities of high-dimensional geometry, we found it necessary to develop a new method that is somewhat simpler than the one used in~\cite{BBPS}, and which we call `pinching a hyperplane'.

\begin{HistRmk}
The results proved in this paper were first announced in 2017, but the proof of Theorem~\ref{thm:subcrit} was not written down carefully until early 2020. The proof that $p_c(\Z^d,\U) = 0$ for all non-subcritical update families $\U$, on the other hand, and hence also the proof of Corollary~\ref{conj:BBPS}, was completed only very recently, in~\cite{BBMSupper}.  
\end{HistRmk}

\section{An outline of the proof}\label{sec:outline}

In this section we give a high-level overview of the strategy we shall use to prove Theorem~\ref{thm:subcrit}. We shall in fact prove the theorem in the following quantitative form. 

\begin{theorem}\label{thm:subcrit2}
Let $\U$ be a subcritical $d$-dimensional update family. Then
\[
\P_p\big( \0 \in [A] \big) = O\big( p^{2/3} \big).
\]
In particular, $p_c(\Z^d,\U) > 0$.
\end{theorem}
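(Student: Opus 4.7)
The plan is a multi-scale renormalisation built around \emph{stable droplets}. Subcriticality provides, in every open hemisphere of $\SS^{d-1}$, a point of $\interior(\S)$, so by compactness one can choose a finite set $\T \subset \interior(\S)$ that is not contained in any closed hemisphere. For any level vector $a \in \R^{\T}$, the polytope
\[
D_a \;=\; \bigcap_{u\in\T}\{x\in\R^d : \<x,u\>\leq a_u\}
\]
is then bounded and, because each defining normal is stable, $\U$-invariant: the $\U$-closure of any subset of $D_a\cap\Z^d$ stays inside $D_a$. The fact that $\T$ lies in the \emph{interior} of $\S$, not merely in $\S$, means the same is true of any small perturbation of $\T$, producing a flexible hierarchy of droplets with which to cover the infected set at each scale.

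Fix a geometric sequence of scales $L_1 \ll L_2 \ll \cdots$ and let $q_k$ denote the supremum, over droplets of diameter at most $L_k$ containing $\0$, of the probability that the $\U$-closure of the initial set \emph{spans} the droplet (reaches within a bounded distance of every face). The core of the proof is an inductive estimate of the form
\[
q_{k+1} \;\leq\; C\, L_{k+1}^{C'}\, q_k^{\,2},
\]
with a base case $q_1 = O(p)$ coming from the need for at least one initial infection in a bounded neighbourhood of $\0$. This recursion iterates to a super-exponentially small bound in $k$; the event $\0\in[A]_\U$ decomposes according to the diameter of the minimal droplet it forces to be spanned, and summing over $k$ while balancing the trivial contribution of very small droplets against the recursive contribution delivers the $O(p^{2/3})$ estimate (the exponent $2/3$ is not sharp and arises purely from this optimisation, any fixed positive exponent being enough for $p_c(\Z^d,\U)>0$).

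The main obstacle, and the essentially new ingredient, is establishing the recursion; this is where the \emph{pinching a hyperplane} technique enters. Given a droplet $D$ of diameter $L_{k+1}$ spanned by the infection, choose a face of $D$ lying in a stable hyperplane $\Pi$ and, for each parallel hyperplane $\Pi'$ at interior distance of order $L_k$ from $\Pi$, consider the pinched sub-droplet $D'\subset D$ cut off by $\Pi'$ together with the thin slab $S$ between $\Pi$ and $\Pi'$. Stability of $\Pi'$ means that the rules of $\U$ extend across $\Pi'$ by only a bounded amount, so the infection cannot pass from $D'$ into $S$ (or vice versa) without initial seeds on both sides. A spanning event in $D$ therefore forces both (i) a spanning event in $D'$, to which the inductive hypothesis $q_k$ applies directly, and (ii) a separate, essentially $(d-1)$-dimensional spanning event inside the slab $S$, which can also be bounded by a (rescaled or lower-dimensional) version of $q_k$. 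The approximate independence of the seeds lying in $D'$ and in $S$ yields the square $q_k^{\,2}$, while a union bound over the polynomially many candidate hyperplanes $\Pi'$ and the finitely many faces absorbs into the factor $L_{k+1}^{C'}$. The hard technical work is to choose the notion of `spanning' delicately enough that it is preserved under the pinch on both sides, and to handle the slab spanning event cleanly in every dimension; once that is done, the recursion closes, the sum over scales converges, and $p_c(\Z^d,\U)>0$ follows.
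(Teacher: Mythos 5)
Your opening moves --- compactness to extract a finite set of strongly stable directions, $\U$-closed polytopal droplets, a multi-scale hierarchy --- match the paper's (Lemma~\ref{lem:stronglyexists} and Section~\ref{sec:cubes}), but from there you take a genuinely different route, and that route has a concrete gap. The paper does not run a recursion on spanning probabilities at all: it defines good/bad cubes recursively (a $(k)$-cube is bad if two non-adjacent bad $(k-1)$-cubes lie near it, Definition~\ref{def:goodcubes}), which gives the squaring for free in Lemma~\ref{lem:probibad}, and the heavy lifting is the purely deterministic Proposition~\ref{prop:exists-T}: every cluster of bad $(k)$-cubes inside a good $(k+1)$-cube is enclosed by a cover whose faces are strongly stable hyperplanes perturbed with localised bumps so as to avoid all nearby bad cubes at smaller scales; the union of covers is then $\U$-closed and contains $A$. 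In the paper, \emph{pinching a hyperplane} names this bump-perturbation of a face (Definition~\ref{def:pinch}), not the act of slicing a droplet by a parallel hyperplane, which is what you mean by it. Crucially, the paper only ever needs one-sided $\U$-closedness, because $A$ sits entirely inside the union of covers: nothing outside a cover has to be kept out.

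Your argument, by contrast, needs a two-sided barrier across the cutting hyperplane $\Pi'$, and that is the step that fails. Stability of a direction $u$ gives that the half-space $\H_u^d$ is $\U$-closed; it does \emph{not} give that $-u$ is stable, and for a general subcritical family it need not be (the stable set may consist of small balls around the vertices of a regular $d$-simplex inscribed in $\SS^{d-1}$, where no antipodal pair appears). So the claim that infection cannot pass from $D'$ into $S$ \emph{or vice versa} is false in general: with $D'$ on the $\U$-closed side of $\Pi'$, seeds in the slab $S$ can infect across $\Pi'$ into $D'$, so spanning $D$ does not force an internal spanning of $D'$ by $A\cap D'$ alone, and the $q_k^{\,2}$ factor in your recursion does not close. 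The slab event is a second gap: $S$ is neither a droplet of diameter $O(L_k)$ nor a genuinely lower-dimensional instance of the problem (the $\U$-dynamics on it is still $d$-dimensional), so there is no inductive hypothesis to invoke for it, and the sentence conceding that ``the hard technical work is to choose the notion of spanning delicately enough that it is preserved under the pinch on both sides'' is precisely an admission that the key lemma is missing. These are exactly the obstructions the paper circumvents by wrapping one-sided, nested or strongly disjoint barriers around \emph{all} bad cubes rather than attempting to decouple the process along an interior hyperplane.
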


We shall prove Theorem~\ref{thm:subcrit2} using a multi-scale argument. Hypercubes in $\R^d$, at increasing scales, are deemed either `good' or `bad' (see Definition~\ref{def:goodcubes}). At the smallest scale, a hypercube is `good' if its intersection with the $p$-random set $A$ is empty. Thereafter, a hypercube at the $k$th scale is `good' (roughly speaking) if it does not contain two `independent' bad hypercubes at the $(k-1)$th scale. The idea is that we can find a set of initially uninfected sites (or `barrier'), looking somewhat like a polytope whose sides have been perturbed to avoid nearby infected sites, around each `bad' hypercube at the $(k-1)$th scale that is contained in a `good' hypercube at the $k$th scale. Moreover, and crucially, the finite set of sites of $\Z^d$ bounded by that barrier (including the `bad' hypercube itself) is $\U$-closed\footnote{We say that a set $Q \subset \Z^d$ is $\U$-closed if $[Q]_\U = Q$.} (see Proposition~\ref{prop:exists-T}).

In this way we build up a sequence of barriers with the following properties: each barrier bounds a finite $\U$-closed set of sites; any pair of barriers (together with the sites bounded by them) are either disjoint or nested; and the union of all barriers and their interiors contains $A$, but is (almost surely) not all of $\Z^d$. We emphasize that all of the technical difficulties in the proof will occur during the (deterministic) construction of the barriers (that is, during the proof of Proposition~\ref{prop:exists-T}), which is carried out in Sections~\ref{sec:perturbed}--\ref{sec:covers}. Our only probabilistic argument is quite straightforward, and is given in Section~\ref{sec:cubes}. 

We shall use the fact that $\U$ is subcritical in order to construct approximately-polytopal $\U$-closed sets whose faces are perturbed locally so that they avoid nearby infected sites. Such sets exist because the normals to the faces are in `strongly
stable' directions.

\begin{definition}\label{def:stronglystable}
The interior $\interior(\S)$ of the set $\S = \S(\U)$ of stable directions of~$\U$ is called the \emph{strongly stable set} of $\U$. Directions $u \in \interior(\S)$ are called \emph{strongly stable}.
\end{definition}

Recall that a direction $u$ is stable if the half-space $\H_u^d$ is $\U$-closed; the advantage of strongly stable directions is that `small perturbations' of $\H_u^d$ are also $\U$-closed (see Lemma~\ref{lem:range-closed}). More precisely, this is true if $\U$ is restricted to \emph{destabilizing} rules, i.e., rules $X \in \U$ with $\0 \notin \conv(X)$, since non-destabilising rules could cause local infections in `valleys' on the surface of a perturbed half-space. To avoid this problem, we shall use (as the directions of the faces of our barriers) strongly stable directions that avoid the set
\begin{equation}\label{def:FU}
F(\U) := \bigcup_{X \in\, \U} \, \bigcup_{\substack{x,y \in X \\ x \ne y}} \big\{ u \in \SS^{d-1} \,:\, \< x - y, u \> = 0 \big\}
\end{equation}
of all $u$ perpendicular to a line joining any two sites in any update rule. Since the set $\F(\U)$ is nowhere dense in $\SS^{d-1}$, this restriction has a trivial effect on our choice, made in the following lemma, of strongly stable directions to use in the proof.

\begin{lemma}\label{lem:stronglyexists}
Let $\U$ be a subcritical $d$-dimensional update family. Then there exists a finite set $\str \subset \interior(\S) \setminus F(\U)$ such that $H \cap \str \ne \emptyset$ for every open hemisphere $H \subset \SS^{d-1}$.
\end{lemma}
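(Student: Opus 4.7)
The plan is to first establish the pointwise fact that every open hemisphere contains some point of $\interior(\S) \setminus F(\U)$, and then to pass to a finite set via compactness of $\SS^{d-1}$.

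\textbf{Step 1 (pointwise existence).} For any $v \in \SS^{d-1}$, write $H_v := \{u \in \SS^{d-1} : \langle u,v\rangle > 0\}$ for the open hemisphere with pole $v$. Applying Definition~\ref{def:subcritical} to the closed hemisphere $\overline{H_v}$ produces a non-empty open set $W \subseteq \overline{H_v} \cap \S$. Since $W$ is open and $W \subseteq \S$, we have $W \subseteq \interior(\S)$; and since $W$ is open and $W \subseteq \overline{H_v}$, we have $W \subseteq \interior(\overline{H_v}) = H_v$. On the other hand, $F(\U)$ is by definition a finite union of great $(d{-}2)$-spheres $\{u : \langle x-y, u\rangle = 0\}$, so it is closed and nowhere dense in $\SS^{d-1}$; in particular it cannot cover the non-empty open set $W$, and we may choose
\[
u_v \,\in\, W \setminus F(\U) \;\subseteq\; \big(\interior(\S) \setminus F(\U)\big) \cap H_v.
\]

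\textbf{Step 2 (compactness).} The relation $\langle u, v\rangle > 0$ is symmetric in $u$ and $v$, so $v \in H_{u_v}$ for every $v \in \SS^{d-1}$. Hence $\{H_{u_v} : v \in \SS^{d-1}\}$ is an open cover of the compact sphere $\SS^{d-1}$, and admits a finite subcover $H_{u_{v_1}}, \dots, H_{u_{v_N}}$. Set
\[
\str := \{u_{v_1}, \dots, u_{v_N}\} \;\subset\; \interior(\S) \setminus F(\U).
\]
For any open hemisphere $H = H_v$, pick $i$ with $v \in H_{u_{v_i}}$ (possible by the covering property); the symmetry above then gives $u_{v_i} \in H_v$, so $\str \cap H \ne \emptyset$.

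\textbf{Main obstacle.} There is no serious obstacle: the heavy lifting is done by the subcriticality hypothesis, which already supplies an open set in every hemispherical cap, and the strongly stable set $\interior(\S)$ is by construction its ambient interior. The only mild subtleties are (i) matching the open/closed hemisphere conventions when invoking Definition~\ref{def:subcritical}, and (ii) observing that $F(\U)$ is a finite union of great $(d{-}2)$-spheres, hence nowhere dense, so the extra restriction $u \notin F(\U)$ does not destroy the open sets produced in Step~1. The rest is a textbook finite-subcover argument on $\SS^{d-1}$.
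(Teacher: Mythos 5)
Your proof is correct and follows essentially the same route as the paper: use subcriticality plus the fact that $F(\U)$ is a finite union of great $(d-2)$-spheres (hence nowhere dense) to produce a point of $\interior(\S)\setminus F(\U)$ in every open hemisphere, then exploit the symmetry of $\langle u,v\rangle>0$ to turn this into an open cover of $\SS^{d-1}$ and apply compactness. The only cosmetic difference is that you invoke Definition~\ref{def:subcritical} for the closed hemisphere and pass to its interior, whereas the paper applies it directly to the open hemisphere; both are fine.
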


\begin{proof}
We use the compactness of $\SS^{d-1}$. First, set $\S^\circ := \interior(\S) \setminus F(\U)$ and
\[
\HH := \big\{ H_u:u\in \S^\circ\}, \qquad \text{where} \quad H_u := \big\{ v \in \SS^{d-1} : \< u,v \> > 0 \big\},
\]
so that $\HH$ is the collection of all open hemispheres in $\SS^{d-1}$ centred at elements of~$\S^\circ$. We claim that $\HH$ is an open cover of $\SS^{d-1}$. To show this, let $w \in \SS^{d-1}$ and observe that $w \in H_u$ for some $H_u \in \HH$ if and only if $u \in H_w$ for some $u\in \S^\circ$. It therefore suffices to show that $H \cap \S^\circ$ is non-empty for every open hemisphere $H \subset \SS^{d-1}$. Thus, let $H \subset \SS^{d-1}$ be an open hemisphere, and recall from Definition~\ref{def:subcritical} that $\interior( H \cap \S ) \ne \emptyset$, so there exists a non-empty open set $\O \subset H \cap \interior(\S)$. Since $F(\U)$ is a finite union of $(d-2)$-dimensional subspheres of $\SS^{d-1}$, it is nowhere dense in $\SS^{d-1}$, and it follows that $\O \setminus F(\U)$ has non-empty interior. In particular, $H \cap \S^\circ$ is non-empty, as claimed. 

Now, since $\HH$ is an open cover of $\SS^{d-1}$, it follows that it has a finite sub-cover. Moreover, the set $\str$ of centres of the open hemispheres in this finite sub-cover has the desired property, since if $H \subset \SS^{d-1}$ is an open hemisphere with centre $w$, then $w \in H_u$ for some $u \in \str$, which implies that $u \in H \cap \str$, as required.
\end{proof}

It is natural to ask whether one can always choose $\str$ to have size $d + 1$. In fact one can always choose such an $\str$, and this can be shown using Helly's Theorem.\footnote{The authors thank Wojciech Samotij for pointing this out.} This is optimal, since $\S$ might consist of small open balls around the vertices of a regular $d$-dimensional simplex inscribed in $\SS^{d-1}$.


Let us fix, for the rest of the paper, a subcritical $d$-dimensional update family $\U$, a set $\str \subset \interior(\S) \setminus F(\U)$ as in Lemma~\ref{lem:stronglyexists}, and a constant $\eps > 0$ such that
\begin{equation}\label{eq:subcrit-eps}
\big\{ v \in \SS^{d-1} : \| u - v \| \le \eps \big\} \subset \interior(\S) \setminus F(\U)
\end{equation}
for each $u \in \str$, where we write (here and throughout the paper) $\| \cdot \|$ for the Euclidean norm on $\R^d$. We also write $d(x,y) = \| x - y \|$ and, for $x \in \R^d$ and $r \ge 0$, 
\begin{equation}\label{def:Br}
B_r(x) := \big\{ y \in \R^d \,:\, d(x,y) \le r \big\}
\end{equation}
for the closed Euclidean ball of radius $r$ centred at $x$. Given sets $\X,\Y \subset \R^d$, we write $d(\X,\Y)$ for the infimum of $d(x,y)$ over $x \in \X$ and $y \in \Y$, and say that $\X$ and $\Y$ are \emph{adjacent} if $d(\X,\Y) = 0$. Finally, we write $\X_\Z$ for the discrete set $\X \cap \Z^d$.

To finish the section, let us note that $A$ will always denote a subset of $\Z^d$; in all deterministic statements this set will be arbitrary, and in probabilistic statements it will be chosen to be $p$-random (i.e., we consider the product measure $\Pr_p$ on subsets of $\Z^d$). In particular, we shall use $A$ to define `good' and `bad' cubes (see Definition~\ref{def:goodcubes}), and thereby $A$ will appear in our main deterministic statement, Proposition~\ref{prop:exists-T}.

\section{Good and bad cubes, and the main proposition}\label{sec:cubes}

In this section we state our main deterministic result, Proposition~\ref{prop:exists-T}, and use it to deduce Theorem~\ref{thm:subcrit2}. The first step is to define explicitly the framework for our multi-scale argument. This will involve defining the hypercubes that we shall work with at each scale, and defining precisely `good' and `bad' hypercubes.

First, we define sequences $(\Delta_k)_{k=1}^\infty$, which will be the side-lengths of hypercubes at the $k$th scale, and $(g_k)_{k=1}^\infty$, which will be the maximum distance between a hypercube $Q$ at the $(k+1)$th scale, and a hypercube at the $k$th scale that can affect whether $Q$ is good or bad (see~\eqref{eq:k-cubes} and Definition~\ref{def:goodcubes}). These will need to be chosen so that $\Delta_k \ll g_k \ll \Delta_{k+1}$.

Thus, fix an arbitrary $1 < \beta < 3/2$, and let $p > 0$ be sufficiently small. Set $\Delta_1 := \big\lfloor p^{-1/(3d+2)} \big\rfloor$, and for each $k \ge 1$, define
\begin{equation}\label{def:Dk:gk}
\Delta_{k+1} := \big\lfloor \Delta_k^{1/2} \big\rfloor \cdot \Delta_k  \qquad \text{and} \qquad g_k := \Delta_k^{\beta}.
\end{equation}

Now, a \emph{$(k)$-cube} is a (continuous) subset of $\R^d$ of the form
\begin{equation}\label{eq:k-cubes}
x + \big[ 0, \Delta_k \big)^d,
\end{equation}
for some $x \in (\Delta_k \Z)^d$. Note in particular that the $(k)$-cubes form a tiling of $\R^d$. 

As noted in Section~\ref{sec:outline}, the following definition depends on the (arbitrary) set $A \subset \Z^d$.

\begin{definition}\label{def:goodcubes}
A (1)-cube $Q$ is \emph{good} if $Q \cap A = \emptyset$, and otherwise it is \emph{bad}. For each $k \ge 2$, a $(k)$-cube $Q$ is \emph{bad} if there exist non-adjacent bad $(k-1)$-cubes $Q_1$ and $Q_2$ with
\begin{equation}\label{eq:QQ1Q2}
\max \big\{ d(Q,Q_1), d(Q,Q_2) \big\} \le g_{k-1};
\end{equation}
otherwise $Q$ is \emph{good}. Note that $Q_1$ and $Q_2$ may lie outside~$Q$.
\end{definition}

If $Q$ is a $(k)$-cube and $k \geq 2$, then the event $\{ Q \text{ is good} \}$ depends on elements of $A$ outside of $Q$, and therefore these events are not (in general) independent for different $(k)$-cubes. This is why we allow collections of pairwise-adjacent bad $(k-1)$-cubes inside good $(k)$-cubes; it is also the reason, in the following definition, that we take maximal unions of pairwise-adjacent bad $(k)$-cubes, rather than singleton bad $(k)$-cubes.

\begin{definition}\label{def:Qk}
For each $k \geq 1$, define $\QQ_k = \QQ_k(A)$ to be the collection of all sets $Q \subset \R^d$ such that $Q$ is the union of a maximal collection of pairwise-adjacent bad $(k)$-cubes, and $Q$ intersects a good $(k+1)$-cube. For each $k \ge 1$ and each $Q \in \QQ_k$, let $x_Q$ be an arbitrary (but fixed) element of $Q$.
\end{definition}

Thus, if $Q \in \QQ_k$ then $Q = Q_1 \cup \dots \cup Q_\ell$, where $Q_1,\dots,Q_\ell$ are distinct bad $(k)$-cubes, $Q_i$ and $Q_j$ are adjacent for all $i \neq j$ (so, in particular, $1 \le \ell \le 2^d$), and $Q$ intersects a good $(k+1)$-cube. Moreover, since $Q$ is maximal, it follows from~\eqref{def:Dk:gk} and Definition~\ref{def:goodcubes} that all other bad $(k)$-cubes lie at distance at least $g_k / 2$ from $Q$.

We are now ready to state our main deterministic result, Proposition~\ref{prop:exists-T}, whose proof will take up Sections~\ref{sec:perturbed}--\ref{sec:covers}. Recall from Section~\ref{sec:outline} that our plan is to cover each cluster $Q$ of bad $(k)$-cubes that intersect a good $(k+1)$-cube by a set (surrounded by a `barrier') whose intersection with $\Z^d$ is $\U$-closed. The following proposition provides us with such a set, $T_k(Q)$, and moreover guarantees that this set is not too large.

\begin{prop}\label{prop:exists-T}
There exists $\gamma > 0$ depending only on $\str$ such that the following holds. For every set $A \subset \Z^d$, and for each $k \geq 1$ and $Q \in \QQ_k(A)$, there exists a set $T_k(Q)$, with $Q \subset T_k(Q) \subset B_{\gamma \Delta_k}(x_Q)$, such that $\T_\Z = \T \cap \Z^d$ is $\U$-closed, where
\begin{equation}\label{def:T}
\T := \bigcup_{k=1}^\infty \bigcup_{Q \in \QQ_k(A)} T_k(Q),
\end{equation}
\end{prop}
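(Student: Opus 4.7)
The plan is to construct the sets $T_k(Q)$ by induction on $k$, realising each $T_k(Q)$ as an (approximately) polytopal region whose bounding faces have outward normals drawn from the finite strongly stable set $\str$, with each face locally perturbed---via the ``pinching a hyperplane'' technique mentioned in the outline---to avoid obstacles (infected sites at the base scale, and previously-constructed smaller blobs at higher scales). The constant $\gamma$ should depend only on the geometry of the polytope whose faces are perpendicular to $\str$.

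For the base case $k=1$, given $Q \in \QQ_1$, I would first observe that because $Q$ intersects a good $(2)$-cube, Definition~\ref{def:goodcubes} forces every bad $(1)$-cube within distance $g_1$ of $Q$ to be adjacent to (hence contained in) $Q$, so in particular $A \cap B_{g_1/2}(x_Q) \subset Q$. I would then take $T_1(Q)$ to be an intersection of half-spaces $\{ y \in \R^d : \langle y - x_Q, u \rangle \le h_u \}$, one per $u \in \str$, with each $h_u = \Theta(\Delta_1)$ chosen so that $T_1(Q) \supset Q$ and $T_1(Q)$ has diameter $O(\Delta_1)$, and then locally pinch each face---keeping the perturbed outward normal within the $\eps$-tube of strongly stable directions guaranteed by~\eqref{eq:subcrit-eps}---so as to steer clear of infected sites lying too close to the face. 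For the inductive step, given $T_{k-1}(Q')$ for all $Q' \in \QQ_{k-1}$, I would collect the smaller clusters whose barriers lie within $g_{k-1}$ of $Q$, enlarge the base polytope so as to engulf $Q$ together with all such $T_{k-1}(Q')$, and again pinch each face locally to avoid those smaller barriers. The scale separation $\Delta_{k-1} \ll g_{k-1} \ll \Delta_k$, together with the hypothesis that $Q$ intersects a good $(k+1)$-cube, bounds both the number and total size of smaller blobs that must be absorbed, so the polytope can still be chosen with diameter $O(\Delta_k)$; a scale-separation argument then shows that the family $\{T_k(Q)\}$ is laminar.

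To verify that $\T_\Z$ is $\U$-closed, I would suppose for contradiction that $x \in \Z^d \setminus \T_\Z$ and $x + X \subset \T_\Z$ for some $X \in \U$. Since $X$ has bounded diameter, $x + X$ lies in a small neighbourhood of the boundary of a single blob $T_k(Q)$ (by the laminar structure), and there the boundary is a perturbation within $\eps$ of a hyperplane with normal in $\str$; applying Lemma~\ref{lem:range-closed} to the enclosing strongly stable half-space then forces $x \in \T_\Z$, a contradiction. The heart of the proof---and the main obstacle---is the inductive construction itself: one has to pinch each face of the polytope in order to accommodate infected sites and nested smaller barriers without destroying $\U$-closedness at the seams where adjacent faces meet. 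The condition $\str \cap F(\U) = \emptyset$ coming from Lemma~\ref{lem:stronglyexists} is precisely what makes this possible, since it rules out the degenerate directions in which some rule $X \in \U$ has two sites whose difference is orthogonal to a face normal and could therefore straddle a pinched seam to cause an unwanted infection.
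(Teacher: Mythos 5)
Your high-level strategy---perturbed polytopes with face normals in $\str$, whose faces are pinched to dodge obstacles, with the union being $\U$-closed because each blob is $\U$-closed and the family is laminar---matches the paper. But the construction route is different and has gaps. The paper does \emph{not} build the covers by induction on $k$: each $(k)$-cover $T_k(Q)$ is constructed directly (Definition~\ref{def:cover}, Lemma~\ref{lem:covers-exist}) as an intersection of $(k-1,u)$-ranges whose pinches at scale $i<k$ are chosen to avoid \emph{bad $(i)$-cubes}, not smaller covers. Since covers keep all bad $(i)$-cubes at distance $\geq 2\gamma\Delta_i$ from $\edge T_k(Q)$, and each $T_i(Q')$ sits in a ball of radius $\gamma\Delta_i$ about a point in a bad $(i)$-cube, laminarity (Lemma~\ref{lem:strongly:disjoint}) falls out automatically---one never needs to track, engulf, or dodge the smaller blobs. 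The only induction is the inner one over $i = k, k-1, \dots, 1$ inside a single cover (Lemmas~\ref{lem:construct-pinch} and~\ref{lem:construct-pinch-induction}).

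Two concrete problems with your route. First, your inductive step is self-contradictory: you propose to ``enlarge the base polytope so as to engulf $Q$ together with all such $T_{k-1}(Q')$'' and, in the same breath, to ``pinch each face locally to avoid those smaller barriers''---one cannot both engulf and dodge the same sets, and you never say which blobs go inside and which get pinched around. Second, and more seriously, the sentence ``a scale-separation argument then shows that the family $\{T_k(Q)\}$ is laminar'' papers over the core of the proof. In your scheme laminarity is not a corollary---you would need it \emph{before} the construction, since to pinch around the smaller blobs at scale $i$ you must already know they are spaced more than $g_i/2$ apart; otherwise the pinch centers $Z_i$ fail to be $i$-separated, the height bounds of Lemma~\ref{lem:pinched-height} collapse, and $\U$-closure via Lemma~\ref{lem:range-closed} is lost. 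Establishing that spacing from the assumption that the tube around the surface meets only good $(i+1)$-cubes is precisely Lemma~\ref{lem:construct-pinch-induction}, the hardest step of the whole argument, and your proposal offers no substitute for it. You also skip the localisation to $A' = A \cap B_{2\gamma\Delta_k}(x_Q)$ in the proof of Lemma~\ref{lem:covers-exist}, without which the hypotheses of Lemma~\ref{lem:construct-pinch} cannot hold far from $x_Q$ along the infinite base hyperplanes.
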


Let us now show that Theorem~\ref{thm:subcrit2} is a straightforward consequence of Proposition~\ref{prop:exists-T}. To do so, we prove first two simple lemmas about bad $(k)$-cubes. We then use these to show that $[A]_\U \subset \T$ almost surely, and to bound the probability that $\0 \in \T$.

\begin{lemma}\label{lem:k2ind}
For each $k \ge 1$, and every pair of non-adjacent $(k)$-cubes $Q$ and $Q'$, the events $\{ Q \text{ is bad} \}$ and $\{ Q' \text{ is bad} \}$ are independent with respect to the measure $\Pr_p$.
\end{lemma}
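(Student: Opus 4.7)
The plan is to proceed by induction on the scale $k$ and to isolate, for each $(k)$-cube $Q$, a deterministic region $R_k(Q) \subset \R^d$ such that the event $\{Q\text{ is bad}\}$ is measurable with respect to $A \cap R_k(Q)$. Once this is done, independence of $\{Q\text{ is bad}\}$ and $\{Q'\text{ is bad}\}$ for non-adjacent $Q,Q'$ reduces to showing that $R_k(Q)$ and $R_k(Q')$ are disjoint, whereupon the product structure of $\P_p$ finishes the job.

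For the base case $k=1$, Definition~\ref{def:goodcubes} says that $Q$ is bad iff $Q \cap A \ne \emptyset$, so I take $R_1(Q) := Q$. For the inductive step, Definition~\ref{def:goodcubes} tells us that the badness of a $(k)$-cube $Q$ is a function of the badness of all $(k-1)$-cubes $Q''$ with $d(Q,Q'') \le g_{k-1}$, so I set
\[
R_k(Q) := \bigcup_{Q''\,:\,d(Q,Q'') \le g_{k-1}} R_{k-1}(Q''),
\]
where the union is over $(k-1)$-cubes. Letting $\rho_k$ be the smallest number such that $R_k(Q)$ is contained in the closed $\rho_k$-neighbourhood of $Q$ for every $(k)$-cube $Q$, and using that each relevant $Q''$ lies within distance $g_{k-1}$ of $Q$ and has diameter at most $\sqrt{d}\,\Delta_{k-1}$, I get the recursion
\[
\rho_k \le \rho_{k-1} + g_{k-1} + \sqrt{d}\,\Delta_{k-1}, \qquad \rho_1 = 0.
\]

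To close the argument I observe that non-adjacent $(k)$-cubes $Q,Q'$ lie at distance at least $\Delta_k$, so $R_k(Q)$ and $R_k(Q')$ are disjoint as soon as $2\rho_k < \Delta_k$. Since $\Delta_k \sim \Delta_{k-1}^{3/2}$ and $g_j = \Delta_j^\beta$ with $1<\beta<3/2$, consecutive terms in the recursion for $\rho_k$ dwarf the previous ones, so $\rho_k \le 2g_{k-1} = O(\Delta_{k-1}^\beta)$, which is $o(\Delta_k)$. Taking $p$ small enough (equivalently, $\Delta_1$ large enough) makes $2\rho_k < \Delta_k$ hold simultaneously for all $k \ge 1$, and independence of the two events follows because they are functions of disjoint, hence independent, pieces of $A$. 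The only real bookkeeping is verifying this recursion; that the gap $\beta<3/2$ hard-coded in~\eqref{def:Dk:gk} is exactly what makes it work is the one thing worth stating explicitly, but it is not a serious obstacle.
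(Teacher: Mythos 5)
Your proof is correct and takes essentially the same approach as the paper: both arguments show that the event $\{Q \text{ is bad}\}$ is measurable with respect to $A$ restricted to a region within distance $O(g_{k-1}) = O(\Delta_{k-1}^\beta) = o(\Delta_k)$ of $Q$ (the paper bounds this radius directly by $\sum_{i=1}^{k-1}(g_i + \sqrt{d}\,\Delta_i) \le 3\Delta_{k-1}^\beta < \Delta_k/3$, where you phrase it as the recursion $\rho_k \le \rho_{k-1} + g_{k-1} + \sqrt{d}\,\Delta_{k-1}$ with $\rho_k \le 2g_{k-1}$), and then conclude via disjointness of these regions for non-adjacent $(k)$-cubes together with the product structure of $\P_p$.
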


\begin{proof}\hspace{-0.15cm}\footnote{This proof corrects a small mistake in~\cite{BBPS}.}
We shall show that the events $\{ Q \text{ is bad} \}$ and $\{ Q' \text{ is bad} \}$ depend on (the intersection of $A$ with) disjoint subsets of $\Z^d$, which immediately implies that they are independent with respect to the product measure $\Pr_p$. To do so, note first that the state (either good or bad) of a $(k)$-cube $Q$ depends on the states of the $(k-1)$-cubes within distance $g_{k-1}$ of~$Q$. These in turn depend on the states of the $(k-2)$-cubes within distance $g_{k-2}$ of those $(k-1)$-cubes, and so on, until we reach $(1)$-cubes, whose states do not depend on any sites outside of them. Thus, if a site $x$ affects the state of~$Q$, then, by~\eqref{def:Dk:gk}, and since $1 < \beta < 3/2$ and $p$ is sufficiently small, the distance of $x$ from $Q$ must be at most
\begin{equation}\label{eq:sphere:of:influence}
\sum_{i = 1}^{k-1} \big( g_i + \sqrt{d} \cdot \Delta_i \big) \le \sum_{i = 1}^{k-1} 2 \cdot \Delta_i^\beta \le 3 \Delta_{k-1}^\beta < \Delta_k / 3.
\end{equation}
However, if $Q$ and $Q'$ are not adjacent, then their distance from each other is at least $\Delta_k$, and hence the sets of sites that affect their states are disjoint, as claimed.
\end{proof}

It is now easy to bound the probability that a $(k)$-cube is bad.

\begin{lemma}\label{lem:probibad}
For any $(k)$-cube $Q$,
\[
\P_p\big(\text{$Q$ is bad}\big) \le \Delta_k^{-(2d+2)}.
\]
\end{lemma}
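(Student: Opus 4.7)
My plan is to induct on $k$, carrying the bound $\Delta_k^{-(2d+2)}$ through the recursion.

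For the base case $k=1$, a $(1)$-cube $Q$ is bad exactly when $Q\cap A\ne\emptyset$, and $Q$ contains $\Delta_1^d$ sites of $\Z^d$, so the union bound gives
\[
\P_p(Q \text{ is bad})\le p\Delta_1^d.
\]
Since $\Delta_1=\lfloor p^{-1/(3d+2)}\rfloor\le p^{-1/(3d+2)}$ by~(\ref{def:Dk:gk}), this is at most $p^{(2d+2)/(3d+2)}\le \Delta_1^{-(2d+2)}$, as required.

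For the inductive step $k\ge 2$, I would invoke Definition~\ref{def:goodcubes}: if $Q$ is bad then there exist non-adjacent bad $(k-1)$-cubes $Q_1,Q_2$ with $d(Q,Q_i)\le g_{k-1}$. By Lemma~\ref{lem:k2ind} the events $\{Q_1\text{ bad}\}$ and $\{Q_2\text{ bad}\}$ are independent, and the inductive hypothesis bounds each by $\Delta_{k-1}^{-(2d+2)}$. Applying the union bound over ordered pairs gives
\[
\P_p(Q \text{ is bad})\le N^2\cdot \Delta_{k-1}^{-2(2d+2)},
\]
where $N$ is the number of $(k-1)$-cubes within distance $g_{k-1}$ of $Q$. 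The union of these $(k-1)$-cubes fits inside a hypercube of side at most $\Delta_k+2g_{k-1}+2\Delta_{k-1}$, so
\[
N\le \Big(\tfrac{\Delta_k}{\Delta_{k-1}}+\tfrac{2g_{k-1}}{\Delta_{k-1}}+2\Big)^d\le C\Big(\tfrac{\Delta_k}{\Delta_{k-1}}\Big)^d
\]
for a constant $C=C(d)$, using $g_{k-1}/\Delta_k=\Delta_{k-1}^{\beta-3/2}\ll 1$ since $\beta<3/2$ and $p$ is small.

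Substituting yields $\P_p(Q\text{ is bad})\le C^2\Delta_k^{2d}\Delta_{k-1}^{-(6d+4)}$, so the target bound reduces to $C^2\Delta_k^{4d+2}\le \Delta_{k-1}^{6d+4}$. Since $\Delta_k\le \Delta_{k-1}^{3/2}$, the left-hand side is at most $C^2\Delta_{k-1}^{6d+3}$, so it suffices to have $C^2\le \Delta_{k-1}$, which follows from taking $p$ small enough that $\Delta_1$, and hence $\Delta_{k-1}$, is large.

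The main obstacle to keep an eye on is structural rather than technical: the exponent $2d+2$ must be preserved under a step that squares the probability (because of the two-bad-cube requirement) and multiplies by $N^2\sim(\Delta_k/\Delta_{k-1})^{2d}$. This is exactly what the scale choices $\Delta_k\approx \Delta_{k-1}^{3/2}$ and $g_k=\Delta_k^\beta$ with $\beta<3/2$ are engineered to allow: they open a one-unit gap between the exponents $6d+3$ and $6d+4$, which is precisely the slack needed to absorb the combinatorial constant $C^2$ from counting $(k-1)$-subcubes.
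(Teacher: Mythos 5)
Your proof is correct and is essentially the same as the paper's: both argue by induction on $k$, handle the base case via a first-moment (union) bound, invoke Lemma~\ref{lem:k2ind} for independence, count the $O\big((\Delta_k/\Delta_{k-1})^d\big)$ choices of each $Q_i$, and close the induction using $\Delta_k \le \Delta_{k-1}^{3/2}$ together with one spare power of $\Delta_{k-1}$ to absorb the combinatorial constant. The only cosmetic difference is that the paper bounds the count directly by $3^d\Delta_{k-1}^{d/2}$ rather than carrying $C(\Delta_k/\Delta_{k-1})^d$ forward, which amounts to the same thing.
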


\begin{proof}
The proof is by induction on~$k$. Set $q_k := \Delta_k^{-(2d+2)}$, and recall that a (1)-cube is bad if and only if it contains an element of $A$. Since $\Delta_1 \le p^{-1/(3d+2)}$, the expected size of the set $Q \cap A$ is $\Delta_1^d \cdot p \le \Delta_1^{-(2d+2)} = q_1$, so the claimed bound holds when $k = 1$. 

So let $k \ge 2$, let $Q$ be a $(k)$-cube, and suppose that the claimed bound holds for $(k-1)$-cubes. If $Q$ is bad then, by Definition~\ref{def:goodcubes}, there exist distinct, non-adjacent bad $(k-1)$-cubes $Q_1$ and $Q_2$ satisfying~\eqref{eq:QQ1Q2}. By~\eqref{def:Dk:gk}, and since $\beta < 3/2$, there are at most
\[
\bigg( \frac{3\Delta_k}{\Delta_{k-1}} \bigg)^d \le 3^d \cdot \Delta_{k-1}^{d/2}
\]
choices for each of $Q_1$ and $Q_2$, and the states of $Q_1$ and $Q_2$ are independent by Lemma~\ref{lem:k2ind}. 

\noindent It follows that
\[
\P_p\big(\text{$Q$ is bad}\big) \leq \big( 3^d \cdot \Delta_{k-1}^{d/2} \big)^2 \cdot q_{k-1}^2 = 3^{2d} \cdot \Delta_{k-1}^{-(3d+4)},
\]
and hence, since $p$ was chosen sufficiently small, and again using~\eqref{def:Dk:gk},
\[
\P_p\big( \text{$Q$ is bad} \big) \leq \Delta_{k-1}^{-(3d+3)} \leq \Delta_k^{-2(3d+3)/3} = q_k,
\]
as required.
\end{proof}

To deduce Theorem~\ref{thm:subcrit2} from Proposition~\ref{prop:exists-T}, we apply Lemma~\ref{lem:probibad} twice: first to show that $[A]_\U \subset \T$ almost surely, and then to bound the probability that $\0 \in \T$. 

\begin{proof}[Proof of Theorem~\ref{thm:subcrit2}]
We may assume $p$ is sufficiently small, otherwise the assertion holds trivially. Let $A$ be a $p$-random subset of $\Z^d$ and set $\QQ_k := \QQ_k(A)$. Now, for each $k \geq 1$ and $Q \in \QQ_k$, let $T_k(Q)$ be the set given by Proposition~\ref{prop:exists-T}, and let $\T \subset \R^d$ be defined as in~\eqref{def:T}. In particular, by the proposition, the set $\T_\Z \subset \Z^d$ is $\U$-closed. 

We claim that $[A]_\U \subset \T$ almost surely. To prove this, let $x \in A$ and consider the (unique) sequence $x \in Q_1 \subset Q_2 \subset \dots$ such that $Q_k$ is a $(k)$-cube. By Lemma~\ref{lem:probibad}, the probability $Q_k$ is bad tends to zero as $k \to \infty$, and hence almost surely some member of the sequence is good. Noting that $Q_1$ is bad (since $x \in A$), choose $k \ge 1$ minimal such that $Q_{k+1}$ is good, and observe that, by Definition~\ref{def:Qk}, the bad $(k)$-cube $Q_k$ is contained in some member of $\QQ_k$. It follows that $x$ is almost surely contained in $Q \subset T_k(Q)$ for some $k \ge 1$ and $Q \in \QQ_k$, and hence the set $A$ is almost surely contained in $\T$. But $\T_\Z$ is $\U$-closed, so if $A \subset \T$ then the closure $[A]_\U$ is also contained in $\T$, as claimed.

It follows from the claim, and the definition~\eqref{def:T} of $\T$, that
\begin{equation}\label{eq:subcrit-proof-union}
\P_p\big( \0 \in [A]_\U \big) \leq \, \P_p\bigg( \0 \in \bigcup_{k=1}^\infty \bigcup_{Q \in \QQ_k} T_k(Q) \bigg) \leq \, \sum_{k=1}^\infty \P_p\bigg( \0 \in \bigcup_{Q \in \QQ_k} T_k(Q) \bigg).
\end{equation}
To bound the right-hand side of~\eqref{eq:subcrit-proof-union}, recall from Definition~\ref{def:Qk} and Proposition~\ref{prop:exists-T} that $T_k(Q)$ contains at least one bad $(k)$-cube, and is contained in the ball $B_{\gamma \Delta_k}(x_Q)$. Thus, if $\0 \in T_k(Q)$ for some $Q \in \QQ_k$, then $\0 \in B_{\gamma \Delta_k}(x_Q)$, and so there must exist a bad $(k)$-cube within distance $2\gamma \Delta_k$ of~$\0$. Noting that there are at most $(4\gamma)^d$ such cubes, it follows, by Lemma~\ref{lem:probibad}, and since $\Delta_1 = \lfloor p^{-1/(3d+2)} \rfloor$ and $p$ is sufficiently small, that 
\[
\P_p\big( \0 \in [A]_\U \big) \le (4\gamma)^d \sum_{k=1}^\infty \Delta_k^{-(2d+2)} \le (8\gamma)^d \cdot \Delta_1^{-(2d+2)} = O\big( p^{(2d+2)/(3d+2)} \big),
\]
as required.
\end{proof}

In order to complete the proof of Theorem~\ref{thm:subcrit2}, it therefore suffices to prove Proposition~\ref{prop:exists-T}. To do so, first, in Section~\ref{sec:perturbed}, we define a family of `perturbed surfaces' that will be used to construct the boundaries of the sets $T_k(Q)$. Then, in Section~\ref{avoid:sec}, we show that these surfaces can be chosen to avoid bad cubes. Finally, in Section~\ref{sec:covers}, we use these surfaces to construct the sets $T_k(Q)$, and show that they have the claimed properties. We remark that most of the technical difficulties are contained in Section~\ref{avoid:sec}.

\section{Perturbed surfaces}\label{sec:perturbed}

In this section we define and prove key properties of certain families of surfaces in~$\R^d$. These surfaces will later be used as the faces of the perturbed polytopes $T_k(Q)$ that we shall construct (in the proof of Proposition~\ref{prop:exists-T}) around clusters of bad $(k)$-cubes. The surfaces are defined (in Definition~\ref{def:pinch}) relative to a co-dimension 1 hyperplane, which is modified by adding `bumps' at various scales, the bumps at larger scales being flatter and more spread out than the bumps at smaller scales.

We say that a set $Z \subset \R^d$ is \emph{$i$-separated} if $d(x,y) > g_i / 2$ for all distinct $x,y \in Z$, and that a $k$-tuple $(Z_1,\dots,Z_k)$ of subsets of $\R^d$ is \emph{$k$-separated} if the set $Z_i$ is $i$-separated for each $1 \leq i \leq k$. We write $\{u\}^\perp := \{v \in \R^d : \< u,v \> = 0 \}$ for the co-dimension~$1$ hyperplane with normal $u$, and we shall use the function $c\colon \R \to \R$, defined by
$$c(x) := (\cos x)^2 \cdot \1 \big[ |x| \leq \pi/2 \big],$$
which we note is differentiable everywhere.


\begin{definition}\label{def:pinch}
Let $k \geq 0$ and $u \in \SS^{d-1}$. A \emph{$(k,u)$-pinch} is a surface 
\[
\Sigma = \Sigma(u,\lambda;Z_1,\dots,Z_k) \subset \R^d
\]
defined by a real number $\lambda$ and 
a $k$-separated $k$-tuple $(Z_1, \dots, Z_k)$, where each $Z_i$ is a subset of $\{u\}^\perp$, as follows:
\[
\Sigma := \big\{ x + h(x)u  : x \in \{u\}^\perp \big\},
\]
where $h\colon\R^d \to \R$ is the \emph{height function}\footnote{Note that the sets $Z_i$ may be infinite; however, we show in Lemma~\ref{lem:pinched-height} that the assumption that $(Z_1, \dots, Z_k)$ is $k$-separated implies that $h(x)$ is finite. Note also that when $k = 0$ we have $h(x) = \lambda$, so in this case $\Sigma$ is just a translation of the co-dimension 1 hyperplane $\{u\}^\perp$.} 
\begin{equation}\label{def:h}
h(x) := \lambda + \sum_{i=1}^k 2^4 \gamma \Delta_i \sum_{z \in Z_i} c \bigg( \frac{2^5 \cdot d(x,z)}{g_i} \bigg),
\end{equation}
Here $\gamma > 0$ is a sufficiently large constant depending only on $\str$. 
\end{definition}

\begin{figure}[ht]
\centering
\includegraphics{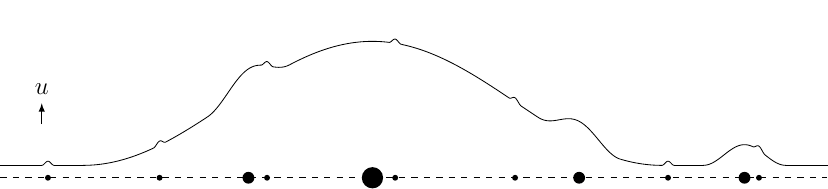}
\caption{A $(3,u)$-pinch with $d=2$. The dashed line along the bottom of the figure is the base of the pinch, somewhat vertically offset for clarity. Projected onto the base, the large dot is the element of $Z_3$, the medium dots are the elements of $Z_2$, and the small dots are the elements of $Z_1$.}
\label{fig:construct-pinch}
\end{figure}

We also define a corresponding \emph{$(k,u)$-range} $\Xi = \Xi(u, \lambda; Z_1, \dots, Z_k)$ by
$$\Xi := \big\{ x + hu \,:\, h < h(x), \, x \in \{u\}^\perp \big\}.$$
The constant $\gamma = \gamma(\str)$ in Definition~\ref{def:pinch} is the same as the constant $\gamma$ in Proposition~\ref{prop:exists-T}. It will be assumed to satisfy $\gamma > 2\sqrt{d} + 1$, and also
\begin{equation}\label{eq:gamma}
\bigcap_{u \in \str} \big\{ x \in \R^d : \< x,u \> \le 4d \big\} \subset B_\gamma(\0).
\end{equation}
Such a $\gamma$ exists because $H \cap \str \ne \emptyset$ for every open hemisphere $H \subset \SS^{d-1}$, by Lemma~\ref{lem:stronglyexists}.

We shall show, in Lemma~\ref{lem:range-closed}, that the set $\Xi_\Z$ is $\U$-closed for every $u \in \str$, every $k \ge 1$, and every $(k,u)$-range $\Xi$. However, in order to do so we first need to prove some simple properties of the \emph{partial height functions}
\begin{equation}\label{def:hj}
h_j(x) := \lambda + \sum_{i=j}^k 2^4 \gamma \Delta_i \sum_{z \in Z_i} c\bigg( \frac{2^5 \cdot d(x,z)}{g_i} \bigg),
\end{equation}
where $1 \leq j \leq k$. For convenience, define also $h_{k+1}(x) := \lambda$, and note that $h(x) = h_1(x)$. We remark that these properties will also be useful in Sections~\ref{avoid:sec} and~\ref{sec:covers}. 

In the proofs below, we refer to the co-dimension 1 hyperplane 
$$\Sigma(u,\lambda,\emptyset,\dots,\emptyset) = \big\{ x + \lambda u  \,:\, x \in \{u\}^\perp \big\},$$ 
as the \emph{base} of $\Sigma$ (or $\Xi$), and to the $k$-tuple $(Z_1,\dots,Z_k)$ as the \emph{augmentation} of $\Sigma$ (or $\Xi$). 

\begin{lemma}\label{lem:pinched-height}
Let $\Sigma$ be a $(k,u)$-pinch. For each $1 \leq j \leq k$, the partial height function $h_j$ of $\Sigma$ satisfies
\begin{equation}\label{eq:pinched-height-1}
\| h_j - h_{j+1} \|_\infty \le 2^4 \gamma \Delta_j \quad \text{and} \quad \| h_j - \lambda \|_\infty \leq 2^5 \gamma \Delta_k. 
\end{equation}
Moreover, if $x,y \in \R^d$, then
\begin{equation}\label{eq:pinched-height-2}
| h_j(x) - h_j(y) | \leq 2^{10} \gamma \Delta_j^{1 - \beta} \cdot \| x - y \|.
\end{equation}
\end{lemma}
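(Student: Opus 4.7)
The plan is to exploit the $j$-separation of the $Z_i$'s, which, combined with the compact support of $c$, will make each inner sum behave like a single bump. Specifically, since $c$ vanishes outside $[-\pi/2, \pi/2]$, the bump $x \mapsto c(2^5 d(x,z)/g_i)$ is supported in the ball of radius $\pi g_i/64$ around $z$, and because distinct points of $Z_i$ are more than $g_i/2 > \pi g_i/32$ apart, two such bumps have disjoint supports. Thus at any point $x$, the inner sum $S_i(x) := \sum_{z \in Z_i} c(2^5 d(x,z)/g_i)$ contains at most one nonzero summand, and in particular $0 \le S_i(x) \le 1$. This single observation finiteness-wise justifies the height function (as footnoted) and is the engine of both estimates.

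Given this, the first bound in \eqref{eq:pinched-height-1} is immediate: since $h_j - h_{j+1} = 2^4 \gamma \Delta_j S_j$, we get $\|h_j - h_{j+1}\|_\infty \le 2^4 \gamma \Delta_j$. Iterating yields $\|h_j - \lambda\|_\infty \le \sum_{i=j}^k 2^4 \gamma \Delta_i$, and the recursion $\Delta_{i+1} = \lfloor \Delta_i^{1/2}\rfloor \Delta_i \ge \Delta_i^{3/2}/2$ from \eqref{def:Dk:gk} makes $(\Delta_i)$ grow super-geometrically once $p$ is small, so $\sum_{i=j}^k \Delta_i \le 2 \Delta_k$ and the second bound in \eqref{eq:pinched-height-1} follows.

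For the Lipschitz estimate \eqref{eq:pinched-height-2}, I would first note that $c$ is $1$-Lipschitz on all of $\R$: its derivative is $-\sin(2t)$ on $(-\pi/2,\pi/2)$, which is bounded by $1$ in modulus and vanishes at $\pm \pi/2$, matching the zero derivative outside. Hence each bump is Lipschitz in $x$ with constant $2^5/g_i$. The crucial point is to upgrade this to the full inner sum $S_i$ with the same constant (not with a sum of constants). Along any segment $[x,y] \subset \R^d$, the function $S_i$ is continuous and piecewise Lipschitz with constant $2^5/g_i$, and any transition between bumps happens at a point where $S_i = 0$, by disjointness of supports and the smooth vanishing of $c$ at $\pm\pi/2$; chaining through those zero-valued intermediate points gives $|S_i(x) - S_i(y)| \le (2^5/g_i) \|x - y\|$. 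Multiplying by $2^4 \gamma \Delta_i$ and using $g_i = \Delta_i^\beta$, the $i$-th summand of $h_j$ contributes Lipschitz constant $2^9 \gamma \Delta_i^{1-\beta}$. Since $1-\beta < 0$ and the $\Delta_i$ grow super-geometrically, $\sum_{i \ge j} \Delta_i^{1-\beta} \le 2 \Delta_j^{1-\beta}$, yielding the claimed $2^{10} \gamma \Delta_j^{1-\beta}$.

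The only genuinely non-routine step is the single-bump-to-sum upgrade for $S_i$; all remaining work is geometric-series bookkeeping driven by the super-geometric growth of $(\Delta_i)$.
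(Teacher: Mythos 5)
Your proposal is correct and follows essentially the same approach as the paper: exploit the $i$-separation of $Z_i$ together with the compact support of $c$ to see that at most one bump is active at any point, deduce the first bound in \eqref{eq:pinched-height-1} immediately, sum over scales using the super-geometric growth of $(\Delta_i)$ for the second, and use the $1$-Lipschitz property of $c$ (equivalently, $|c'|\le 1$) for \eqref{eq:pinched-height-2}. The only stylistic difference is in the last step: the paper bounds $\|\nabla h_j\|$ pointwise (again via the one-active-bump observation) and invokes the multivariate mean value theorem, whereas you obtain the Lipschitz constant for the inner sum by chaining through the zero-valued transition points along the segment $[x,y]$; both arguments rest on the same facts and give the same bound.
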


\begin{proof}
Recall from Definition~\ref{def:pinch} that the augmentation $(Z_1,\dots,Z_k)$ of $\Sigma$ is $k$-separated, and therefore $d(y,z) > g_i / 2$ for every $i \in [k]$ and all distinct $y,z \in Z_i$. It follows that, for each $x \in \R^d$, there is at most one $z \in Z_i$ such that $d(x,z) \le g_i/4$, and hence at most one $z \in Z_i$ such that
\[
\frac{2^5 \cdot d(x,z)}{g_i} \leq \frac{\pi}{2}.
\]
The inequality $| h_j(x) - h_{j+1}(x) | \le 2^4 \gamma \Delta_j$ now follows immediately from~\eqref{def:hj}, and
\[
| h_j(x) - \lambda | = |h_j(x) - h_{k+1}(x)| \leq \sum_{i=j}^k 2^4 \gamma \Delta_i \le 2^5 \gamma \Delta_k
\]
also follows, since $h_{k+1}(x) = \lambda$, and by the triangle inequality and the definition~\eqref{def:Dk:gk} of~$\Delta_i$. Since both inequalities hold for all $x \in \R^d$, this proves~\eqref{eq:pinched-height-1}.  

To prove~\eqref{eq:pinched-height-2}, recall that the function $c$ is differentiable, and observe that 
\[
\bigg\| \nabla c\bigg( \frac{2^5 \cdot d(x,z)}{g_i} \bigg) \bigg\| \leq \frac{2^5}{g_i}.
\]
It follows that
\[
\| \nabla h_j(x) \| \leq \sum_{i=j}^k 2^4 \gamma \Delta_i \cdot \frac{2^5}{g_i} = \sum_{i=j}^k 2^9 \gamma \Delta_i^{1 - \beta} \leq 2^{10} \gamma \Delta_j^{1 - \beta},
\]
by the definitions~\eqref{def:Dk:gk} of $\Delta_i$ and $g_i$, and since $\beta > 1$ and $p$ is sufficiently small. Applying the mean value theorem for multivariate functions now yields~\eqref{eq:pinched-height-2}.
\end{proof}

We are now ready to prove the key property of $(k,u)$-ranges: they are $\U$-closed.

\begin{lemma}\label{lem:range-closed}
If $u \in \str$, then $\Xi_\Z$ is $\U$-closed for every $(k,u)$-range $\Xi$.
\end{lemma}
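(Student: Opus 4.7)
The plan is to show directly that no uninfected site $y \in \Z^d \setminus \Xi$ can be infected by any rule $X \in \U$ from $\Xi_\Z$; equivalently, that for every such $y$ and every $X \in \U$ there exists $x \in X$ with $y + x \notin \Xi$. I decompose points of $\R^d$ along $u$: write $y = x_y + \eta_y u$ with $x_y \in \{u\}^\perp$ and $\eta_y = \langle y, u \rangle$, and write $x = x' + \langle x, u \rangle u$ with $x' = x - \langle x, u \rangle u \in \{u\}^\perp$. Then $y + x = (x_y + x') + (\eta_y + \langle x, u \rangle) u$, so membership in $\Xi$ is governed by the height function: $y + x \notin \Xi$ iff $\eta_y + \langle x, u \rangle \ge h(x_y + x')$. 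Since $y \notin \Xi$ gives $\eta_y \ge h(x_y)$, it suffices to exhibit $x \in X$ with
\[
\langle x, u \rangle \,\ge\, h(x_y + x') - h(x_y).
\]

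For the right-hand side I invoke the Lipschitz bound \eqref{eq:pinched-height-2} of Lemma~\ref{lem:pinched-height} with $j = 1$, giving an upper bound of $2^{10} \gamma \Delta_1^{1-\beta} \|x'\| \le 2^{10} \gamma \Delta_1^{1-\beta} \|x\|$. Because $\beta > 1$ and $\Delta_1 = \lfloor p^{-1/(3d+2)} \rfloor$, and because $\U$ is finite so $R := \max_{X \in \U} \max_{x \in X} \|x\|$ is a constant, this upper bound is $o(1)$ as $p \to 0$, and so is smaller than any prescribed positive constant once $p$ is sufficiently small.

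For the left-hand side I use the hypothesis $u \in \str \subset \interior(\S) \setminus F(\U)$ to prove that, for each rule $X \in \U$,
\[
\alpha_X(u) \,:=\, \max_{x \in X} \, \langle x, u \rangle \,>\, 0,
\]
which gives a strictly positive constant $\delta := \min_{X \in \U} \alpha_X(u)$ depending only on $\U$ and $u$. Stability of $u$ already yields $\alpha_X(u) \ge 0$. The condition $u \notin F(\U)$ says that the values $\langle x, u\rangle$ are pairwise distinct as $x$ ranges over $X$, so the maximum is attained uniquely, at some $x^* \in X$. If $\langle x^*, u \rangle$ were $0$, then since $x^* \neq \0$ we could perturb $u$ slightly in the direction of $-x^*$ (and renormalize to $\SS^{d-1}$) to obtain $v \in \SS^{d-1}$ arbitrarily close to $u$ with $\langle x^*, v \rangle < 0$; by continuity and the strict gap from $x^*$ at $u$, the remaining elements of $X$ would still satisfy $\langle x, v \rangle < 0$, forcing $\alpha_X(v) < 0$ and contradicting $u \in \interior(\S)$. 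Hence $\langle x^*, u \rangle > 0$ strictly, and $\delta > 0$. Taking $x = x^*$, the left-hand side is $\ge \delta$ while the right-hand side is at most $2^{10} \gamma \Delta_1^{1-\beta} R = o(1)$, so the required inequality holds for $p$ sufficiently small, completing the proof since $y + x^* \in \Z^d$ automatically.

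The main obstacle is the positivity argument above: deducing from strong stability together with the avoidance of $F(\U)$ that $\alpha_X(u)$ is not merely nonnegative but bounded strictly away from zero. The remainder of the proof is a direct comparison between a uniform positive constant and a quantity that is $o(1)$ by the Lipschitz estimate of Lemma~\ref{lem:pinched-height}.
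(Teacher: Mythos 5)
Your proof is correct, and it follows a genuinely different route from the paper's. The paper reduces (after a translation) to the origin lying on or above the pinch, covers the range $\Xi_\Z$ by the family of half-spaces $\H_v^d$ with $v$ at geodesic distance $\eps$ from $u$ (all stable by~\eqref{eq:subcrit-eps}), and then argues that no rule $X$ can lie in this union: if it did, one could pick $x_1, x_2 \in X$ sitting in different half-spaces of the family and, by a convex combination of the two normals $v_1, v_2$, manufacture a direction $v$ with $\<x_1 - x_2, v\> = 0$ (so $v \in F(\U)$) and $\|u - v\| \le \eps$, contradicting~\eqref{eq:subcrit-eps}. Your argument instead isolates a uniform positive constant $\delta := \min_{X \in \U} \max_{x \in X}\<x,u\>$ and compares it directly against the $o(1)$ Lipschitz bound on the height function from Lemma~\ref{lem:pinched-height}. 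The key new ingredient you supply is the strict positivity of $\alpha_X(u) = \max_{x\in X}\<x,u\>$: stability gives $\alpha_X(u) \ge 0$, the condition $u \notin F(\U)$ gives uniqueness of the maximizer $x^*$, and the perturbation-plus-strong-stability argument rules out $\alpha_X(u) = 0$ (if $\<x^*,u\> = 0$ and all other $\<x,u\> < 0$ strictly, perturbing $u$ toward $-x^*$ yields a nearby unstable direction). This cleanly separates the roles of strong stability and of $F(\U)$-avoidance: the former gives $\delta > 0$, the latter gives uniqueness of $x^*$. Your route is more direct and arguably more transparent; the paper's route stays within the half-space comparison framework and avoids extracting the explicit constant $\delta$, at the cost of a somewhat more intricate geometric contradiction. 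Both are valid.
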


\begin{proof}
We are required to show that $x + X \not\subset \Xi_\Z$ for every $x \in \Z^d \setminus \Xi_\Z$ and $X \in \U$. To do so, we use the definition of $\str$, which was chosen using Lemma~\ref{lem:stronglyexists}, and our assumption that $p$ is small, which (by Lemma~\ref{lem:pinched-height}) implies that the fluctuations of the surface of $\Xi$ are small compared with $\eps$. 

Without loss of generality we may assume that $x = \0$, so suppose that $h(\0) \leq 0$, and that $X \subset \Xi_\Z$ for some $X \in \U$. We claim that
\begin{equation}\label{eq:quasiconvexunion}
\Xi_\Z \subset \bigcup_{v \in S_\eps(\SS^{d-1},u)} \H_v^d,
\end{equation}
where $S_\eps(\SS^{d-1},u) \subset \SS^{d-1}$ is the $(d-2)$-sphere consisting of points of $\SS^{d-1}$ at geodesic 
distance $\eps$ from $u$. To prove~\eqref{eq:quasiconvexunion}, observe first that, by Lemma~\ref{lem:pinched-height}, we have
\[
|h(x) - h(\0)| = |h_1(x) - h_1(\0)| \leq 2^{10} \gamma \Delta_1^{1 - \beta} \cdot \| x \|
\]
for each $x \in \R^d$. Since $h(\0) \leq 0$, and recalling that $\Delta_1 = \lfloor p^{-1/(3d+2)} \rfloor$ and $\beta > 1$, it follows that $h(x) \leq o( \| x \| )$ as $p \to 0$, uniformly over $x \in \R^d$. In particular, we may assume that $h(x) \leq \| x \| (\sin \eps) / 2$. Now, given $x \in \{u\}^\perp$, set $v := (\cos \eps) u - ( \sin \eps ) x / \|x\|$, which is an element of $S_\eps(\SS^{d-1},u)$, and observe that
\[
\< x + h(x)u, v \> = h(x) \cos \eps - \| x \| \sin \eps < 0.
\]
This completes the proof of~\eqref{eq:quasiconvexunion}.


It suffices, therefore, to show that no update rule $X \in \U$ is contained in the set on the right-hand side of~\eqref{eq:quasiconvexunion}. Observe first that if $v \in S_\eps(\SS^{d-1},u)$ then $\|u - v\| \le \eps$, and therefore $v \in \S$, by~\eqref{eq:subcrit-eps}, since $u \in \str$. It follows that $X \not\subset \H_v^d$ for each $v \in S_\eps(\SS^{d-1},u)$. We claim that if $X \subset \Xi_\Z$, then there exist $x_1,x_2 \in X$ and $v_1,v_2 \in S_\eps(\SS^{d-1},u)$ such that 
\begin{equation}\label{eq:xs:and:vs}
x_1 \in \H_{v_1}^d \setminus \H_{v_2}^d \qquad \text{and} \qquad x_2  \in \H_{v_2}^d \setminus \H_{v_1}^d.
\end{equation}
To prove this, choose $v_1 \in S_\eps(\SS^{d-1},u)$ such that $|X \cap \H_{v_1}^d|$ is maximal (recalling that $X$ is finite). Since $X \not\subset \H_{v_1}^d$, there must exist $x_2 \in X \setminus \H_{v_1}^d$. Moreover, if $X \subset \Xi_\Z$ then by~\eqref{eq:quasiconvexunion} there must exist $v_2 \in S_\eps(\SS^{d-1},u)$ with $x_2\in \H_{v_2}^d$. By the maximality of $|X \cap \H_{v_1}^d|$, it follows that there exists $x_1 \in X \cap \H_{v_1}^d$ with $x_1\notin \H_{v_2}^d$, as claimed.

To complete the proof, we shall deduce from~\eqref{eq:xs:and:vs} that there exists a direction $v \in F(\U)$ within distance $\eps$ of $u$, contradicting~\eqref{eq:subcrit-eps}. In order to guarantee that $v \in F(\U)$, we shall choose $v \in \SS^{d-1}$ with $\< x_1 - x_2, v \> = 0$, and in order to guarantee that $\| u - v \| \leq \eps$, we shall choose it on the geodesic in $\SS^{d-1}$ joining $v_1$ to $v_2$. The vector satisfying these two conditions is\footnote{Indeed, $\< x_1 - x_2, v_2 \> \<v_1,x_1 \> + \< x_2 - x_1, v_1 \> \< v_2, x_1 \> = \< x_1 - x_2, v_2 \> \<v_1,x_2 \> + \< x_2 - x_1, v_1 \> \< v_2, x_2 \>$.}
$$v := \frac{ \< x_1 - x_2, v_2 \> v_1 + \< x_2 - x_1, v_1 \> v_2 }{\| \< x_1 - x_2, v_2 \> v_1 + \< x_2 - x_1, v_1 \> v_2 \|},$$
where it follows from~\eqref{eq:xs:and:vs} that $\< x_1 - x_2, v_2 \> \ge 0$ and $\< x_2 - x_1, v_1 \> \ge 0$. Since $v$ is the projection onto $\SS^{d-1}$ of an element of the convex hull of $v_1$ and~$v_2$, it follows that $\| u - v \| \le \eps$, and by~\eqref{eq:subcrit-eps} this contradicts the fact that $v \in F(\U)$, 
as required.
\end{proof}

\section{Construction of pinches avoiding bad cubes}\label{avoid:sec}

In the previous section we defined $(k,u)$-pinches and $(k,u)$-ranges, and proved in Lemma~\ref{lem:range-closed} that $(k,u)$-ranges are $\U$-closed when $u \in \str$. In this section we shall show how to construct $(k,u)$-pinches that avoid (with room to spare) all bad $(i)$-cubes (for all $1 \le i \le k$) inside a region of good $(k+1)$-cubes.

In order to state Lemma~\ref{lem:construct-pinch}, which is the main result of this section, we need to introduce a little notation. Given $u \in \SS^{d-1}$ and $k \in \N$, we define the line segment
\begin{equation}\label{eq:buffer}
L_u^{(k)} := \big\{ \lambda u \,:\, |\lambda| \leq \Delta_k \big\},
\end{equation}
and recall the definition of the Minkowski sum $\X + \Y := \{ x+y : x \in \X, \, y \in \Y \}$.

The following lemma is the key step in the proof of Theorem~\ref{thm:subcrit2}. 

\begin{lemma}\label{lem:construct-pinch}
Let $k \geq 0$ and $u \in \SS^{d-1}$, and let $\Pi$ be a translation of $\{u\}^\perp$. If every $(k+1)$-cube intersecting $\Pi + L_u^{(k+1)}$ is good, then there exists a $(k,u)$-pinch $\Sigma$, with base $\Pi$, such that for each $1 \leq i \leq k$, every $(i)$-cube intersecting $\Sigma + 3\gamma \cdot L_u^{(i)}$ is good.
\end{lemma}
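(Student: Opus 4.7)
Plan. The plan is to construct the sets $Z_k,Z_{k-1},\dots,Z_1$ by downward induction on the scale, beginning with the flat base $\Pi$ and adding bumps at each successive scale to push the surface away from nearby bad cubes. Fix $\lambda\in\R$ so that $\{u\}^\perp+\lambda u=\Pi$, initialise all $Z_i=\emptyset$, and for $1\le j\le k+1$ write $\Sigma^{(j)}$ for the partial pinch with augmentation $(Z_j,\dots,Z_k)$ (so that $\Sigma^{(k+1)}=\Pi$) and $h_j$ for its height function, as in~\eqref{def:hj}. After stage $j$ I will maintain the invariant that $\Sigma^{(j)}\subset\Pi+L_u^{(k+1)}$ and that, for every $j\le i\le k$, every $(i)$-cube meeting $\Sigma^{(j)}+4\gamma L_u^{(i)}$ is good (working with the slightly enlarged buffer $4\gamma$ rather than $3\gamma$ absorbs the cumulative inductive losses; the final invariant at $j=1$ still yields the claimed $3\gamma$ buffer). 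The base case $j=k+1$ is exactly the hypothesis of the lemma.

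For the inductive step, given $\Sigma^{(j+1)}$ satisfying the invariant, let $\B_j$ denote the collection of bad $(j)$-cubes $q$ whose centre lies at signed $u$-distance $\delta_q\in[-(3\gamma+1)\Delta_j,\,13\gamma\Delta_j]$ from the point $x_q+h_{j+1}(x_q)u$, where $x_q$ is the orthogonal projection of the centre of $q$ onto $\{u\}^\perp$. These are precisely the bad $(j)$-cubes that the scale-$j$ bumps must cover. Partition $\B_j$ into maximal cliques of pairwise-adjacent bad $(j)$-cubes (each of diameter at most $2\sqrt{d}\,\Delta_j$), and let $Z_j$ consist of the projection of the centroid of each clique onto $\{u\}^\perp$.

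Three things must then be verified. First, $Z_j$ must be $j$-separated: the slab containing $\B_j$ lies inside $\Sigma^{(j+1)}+4\gamma L_u^{(j+1)}$ since $17\gamma\Delta_j\ll 4\gamma\Delta_{j+1}$ for $p$ small, so by the inductive hypothesis every $(j+1)$-cube in that slab is good, and by Definition~\ref{def:goodcubes} distinct cliques of $\B_j$ are at distance strictly greater than $g_j$ in $\R^d$; using Lemma~\ref{lem:pinched-height} to bound the $u$-spread of $\B_j$, projection onto $\{u\}^\perp$ loses at most $O(\gamma\Delta_j)$, giving separation $>g_j/2$ since $g_j=\Delta_j^\beta$ with $\beta>1$. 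Second, every $(j)$-cube meeting $\Sigma^{(j)}+3\gamma L_u^{(j)}$ is good: for a bad $(j)$-cube $q$, if $\delta_q$ lies outside $[-(3\gamma+1)\Delta_j,\,13\gamma\Delta_j]$ then $q\notin\B_j$ and no scale-$j$ bump is placed near $x_q$, so $h_j(x_q)=h_{j+1}(x_q)$ and the cube lies outside $\Sigma^{(j)}+3\gamma L_u^{(j)}$; otherwise $q\in\B_j$ belongs to some clique $C$ whose bump at $z_C$ (with $\|z_C-x_q\|\le\sqrt{d}\,\Delta_j/2$) raises $h_j(x_q)$ by $2^4\gamma\Delta_j\cdot c(2^4\sqrt{d}\,\Delta_j^{1-\beta})=(2^4-o(1))\gamma\Delta_j$, while all other scale-$j$ bumps contribute $0$ at $x_q$ (they lie at distance $>g_j/2$ and $c(2^4)=0$), and choosing $\gamma$ sufficiently large makes this raise exceed $\delta_q+\Delta_j/2+3\gamma\Delta_j$, lifting $\Sigma^{(j)}$ above $q$ with the required buffer. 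Third, the higher-scale invariants survive because $\|h_j-h_{j+1}\|_\infty\le 2^4\gamma\Delta_j$ by Lemma~\ref{lem:pinched-height} and $2^4\Delta_j\ll\Delta_i$ for $i\ge j+1$, so $\Sigma^{(j)}+3\gamma L_u^{(i)}\subset\Sigma^{(j+1)}+4\gamma L_u^{(i)}$; the containment $\Sigma^{(j)}\subset\Pi+L_u^{(k+1)}$ follows from the cumulative estimate $\|h_j-\lambda\|_\infty\le 2^5\gamma\Delta_k\ll\Delta_{k+1}$ of the same lemma.

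The main obstacle is the $j$-separation step above, which requires carefully exploiting the inductive hypothesis on good $(j+1)$-cubes to rule out near-clusterings of non-adjacent bad $(j)$-cubes; this is exactly the point at which the multi-scale structure of good and bad cubes is used in an essential way. The constants are arranged so that a single bump of height $2^4\gamma\Delta_j$ and support of order $g_j/2^6$ simultaneously lifts the surface above every bad cube in its cluster, does not disturb distant clusters, and leaves higher-scale invariants intact; this bookkeeping is routine given the rapid growth $\Delta_{j+1}=\lfloor\Delta_j^{1/2}\rfloor\Delta_j$ and $\beta>1$.
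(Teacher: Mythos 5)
Your overall strategy is the same as the paper's: a downward induction on the scale $j$, adding a set $Z_j$ of bumps at stage $j$ to push the surface away from nearby bad $(j)$-cubes while preserving a buffered invariant at all higher scales (the paper packages one induction step as Lemma~\ref{lem:construct-pinch-induction} and its two claims, and then iterates). There is, however, a genuine gap in your second verification step, and it is precisely the step where the paper's argument does real work.

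You assert that if a bad $(j)$-cube $q$ has $\delta_q$ outside your chosen window, then ``no scale-$j$ bump is placed near $x_q$, so $h_j(x_q)=h_{j+1}(x_q)$.'' This inference is not valid. The scale-$j$ bumps are placed at the projected centroids $z_C$ of cliques $C\subset\B_j$, and such a $z_C$ can lie within the bump's support radius $\approx g_j/20$ of $x_q$ even when $q\notin\B_j$: it suffices that some other bad cube $q'\in C$ has $x_{q'}$ close to $x_q$. When this happens, $h_j$ is raised near $x_q$ by up to $2^4\gamma\Delta_j$, and a cube with $\delta_q$ just above your upper threshold $13\gamma\Delta_j$ can still meet $\Sigma^{(j)}+3\gamma L_u^{(j)}$ (indeed $13\gamma<16\gamma+3\gamma$ makes this arithmetically unavoidable, since after a full-height bump the slab extends down only to $h_{j+1}+13\gamma\Delta_j$). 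This is exactly the case the paper treats in the second half of Claim~\ref{clm:ind2}: when the clique of $Q_1$ does not itself contribute an element of $Z_i$, it shows that any bump from another clique $P'$ near $Q_1$ forces two non-adjacent bad $(i)$-cubes (one from each clique) within distance $g_i$ of a single $(i+1)$-cube that must be good, a contradiction. Your write-up is silent on this interaction between a bump and a bad cube outside its own clique. Two smaller problems compound this: (i) your separation claim that ``distinct cliques of $\B_j$ are at distance strictly greater than $g_j$'' also needs the case split the paper does in Claim~\ref{clm:ind1}, where the nearest cubes in the two cliques may be adjacent to each other and one must pass to non-adjacent representatives that exist by maximality; and (ii) the bound $\|z_C-x_q\|\le\sqrt d\,\Delta_j/2$ should be $2\sqrt d\,\Delta_j$, since a clique has diameter up to $2\sqrt d\,\Delta_j$. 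The high-level idea is right, but the crux — ruling out a bump lifting the surface into a nearby-but-excluded bad cube — is missing, and the constants as written do not close.
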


In order to prove Lemma~\ref{lem:construct-pinch}, we must construct a $k$-separated $k$-tuple $(Z_1, \dots, Z_k)$, where each $Z_i$ is a subset of $\{u\}^\perp$. We shall construct the sets $Z_i$ inductively, using the following lemma, which is really the heart of the matter. 

\begin{lemma}\label{lem:construct-pinch-induction}
Let $k \in \N$ and $u \in \SS^{d-1}$, let $1 \le i \leq k$, and let
\begin{equation}\label{eq:Sigma}
\Sigma = \Sigma(u,\lambda;\emptyset,\dots,\emptyset,Z_{i+1},\dots,Z_k)
\end{equation}
be a $(k,u)$-pinch. Suppose that all $(i+1)$-cubes intersecting $\Sigma + L_u^{(i+1)}$ are good.
Then there exists an $i$-separated set $Z_i \subset \{u\}^\perp$ such that if
\begin{equation}\label{eq:Sigma'}
\Sigma' = \Sigma(u,\lambda;\emptyset,\dots,\emptyset,Z_i,Z_{i+1},\dots,Z_k),
\end{equation}
then all $(i)$-cubes intersecting $\Sigma' + 4\gamma  \cdot L_u^{(i)}$ are good.
\end{lemma}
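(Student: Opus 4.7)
My plan is to define $Z_i$ by placing one potential bump at the projection onto $\{u\}^\perp$ of each cluster of nearby bad $(i)$-cubes, then deciding for each cluster whether the bump is actually installed based on the cluster's height relative to~$\Sigma$. Fix a large constant $C_0$ (e.g.\ $C_0 = 100$) and let $R := \Sigma + C_0\gamma L_u^{(i)}$. Since bumps at scale $i$ raise the height by at most $2^4\gamma\Delta_i$, we have $\Sigma' + 4\gamma L_u^{(i)} \subset \Sigma + (2^4+4)\gamma L_u^{(i)} \subset R$, so only bad $(i)$-cubes in $R$ are relevant. Moreover $C_0\gamma\Delta_i \ll \Delta_{i+1}$ for small $p$, so every $(i+1)$-cube meeting $R$ also meets $\Sigma + L_u^{(i+1)}$ and is therefore good by hypothesis; consequently, if $Q_1, Q_2 \subset R$ were non-adjacent bad $(i)$-cubes with $d(Q_1,Q_2) \le g_i$, the good $(i+1)$-cube containing $Q_1$ would be bad by Definition~\ref{def:goodcubes}. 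It follows that the bad $(i)$-cubes in $R$ partition into pairwise-adjacent \emph{clusters}, each of Euclidean diameter at most $2\sqrt{d}\Delta_i$, with distinct clusters at $\R^d$-distance strictly greater than $g_i$.

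For each cluster $C$, fix any $y_C \in C$, set $z_C := \pi_u(y_C) \in \{u\}^\perp$ (where $\pi_u(x) := x - \<x,u\>u$), and write $h_C^{\min}, h_C^{\max}$ for the minimum and maximum of $\<q,u\>$ over $q \in C$. Define
\[
Z_i := \big\{ z_C \,:\, h_C^{\min} < h(z_C) + 6\gamma\Delta_i \big\}.
\]
For $i$-separation, decompose $|y_C - y_{C'}|^2 = d(z_C, z_{C'})^2 + \<y_C - y_{C'}, u\>^2$. Since $y_C, y_{C'} \in R$ we have $\<y_C, u\> = h(z_C) + \alpha_C$ with $|\alpha_C| \le C_0\gamma\Delta_i$, so $|\<y_C - y_{C'}, u\>| \le |h(z_C) - h(z_{C'})| + 2C_0\gamma\Delta_i \le 2^{10}\gamma\Delta_{i+1}^{1-\beta}\cdot d(z_C, z_{C'}) + 2C_0\gamma\Delta_i$ by Lemma~\ref{lem:pinched-height} (applied at scale $j = i+1$, since $\Sigma$ has augmentation starting there). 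Because $\beta > 1$ and $\Delta_{i+1} \approx \Delta_i^{3/2}$, both $\Delta_{i+1}^{1-\beta} g_i$ and $\Delta_i/g_i$ tend to $0$ as $p \to 0$; combining with $|y_C - y_{C'}| > g_i$, this forces $d(z_C, z_{C'}) > g_i/2$ for $p$ small enough.

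Finally, suppose for contradiction that some bad $(i)$-cube $Q$ intersects $\Sigma' + 4\gamma L_u^{(i)} \subset R$; let $C \ni Q$ be its cluster and $z \in \pi_u(Q)$. By $i$-separation and the support radius $\pi g_i/2^6$ of each bump, only the bump at $z_C$ (if $z_C \in Z_i$) contributes to $h'(z)$. Since $d(z, z_C) \le 2\sqrt{d}\Delta_i$, the same Lipschitz estimate gives $|h(z) - h(z_C)| = o(\Delta_i)$, and smoothness of $c$ at $0$ gives $c(2^5 d(z,z_C)/g_i) \ge 1 - o(1)$. If $z_C \in Z_i$, then $h'(z) \ge h(z_C) + 15\gamma\Delta_i$, while $h_C^{\max} < h(z_C) + 6\gamma\Delta_i + 2\sqrt{d}\Delta_i \le h(z_C) + 7\gamma\Delta_i$ (using $\gamma \ge 2\sqrt{d}$), so $h'(z) - q_u > 4\gamma\Delta_i$ for every $q \in C$; if $z_C \notin Z_i$, then $h'(z) = h(z) \le h(z_C) + o(\Delta_i)$ and $h_C^{\min} \ge h(z_C) + 6\gamma\Delta_i$, so $q_u - h'(\pi_u(q)) > 4\gamma\Delta_i$ for every $q \in C$. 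Both conclusions contradict $Q \cap (\Sigma' + 4\gamma L_u^{(i)}) \ne \emptyset$. The main obstacle is precisely this final step: the threshold $6\gamma\Delta_i$, the fixed bump height $2^4\gamma\Delta_i$, the required $4\gamma\Delta_i$ gap, and the cluster extent $2\sqrt{d}\Delta_i$ must all be balanced, which works thanks to the slack $\gamma \ge 2\sqrt{d}$; a more subtle point is $i$-separation, where $\pi_u$ could in principle contract distances but is saved precisely by the smallness of the Lipschitz constant $\Delta_{i+1}^{1-\beta}$ at scale $g_i$, guaranteed by $\beta > 1$.
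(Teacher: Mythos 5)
Your argument is correct in substance, but it takes a genuinely different route from the paper. The paper installs a bump only at clusters $P$ that intersect the \emph{narrow} band $\Sigma + 4\gamma L_u^{(i)}$ (choosing $y_P \in P \cap (\Sigma + 4\gamma L_u^{(i)})$), and then the verification that no bad $(i)$-cube meets $\Sigma' + 4\gamma L_u^{(i)}$ splits into two cases: either the offending cluster $P$ does intersect $\Sigma + 4\gamma L_u^{(i)}$ (in which case its own bump overshoots it, roughly your Case~1), or it does not, and then a bump from a \emph{different} cluster must be contributing at $\pi_u(x)$; the paper shows this produces a second bad $(i)$-cube near the $(i+1)$-cube containing $Q_1$, contradicting goodness. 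You eliminate that second case entirely by widening the band to $R = \Sigma + C_0\gamma L_u^{(i)}$ and installing a bump whenever the cluster height passes the threshold $h_C^{\min} < h(z_C) + 6\gamma\Delta_i$, so the only dichotomy you need is ``bump installed'' (bump overshoots cluster) versus ``bump not installed'' (cluster is at least $6\gamma\Delta_i$ above $\Sigma$ and the band cannot reach it). This is arguably cleaner: the height-threshold trick replaces the paper's detour through Definition~\ref{def:goodcubes} for the second case, at the cost of a slightly wider band and one more small-$p$ comparison. Your $i$-separation argument is the same Pythagorean decomposition used by the paper.

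One small but genuine error: you assert that $\Delta_{i+1}^{1-\beta} g_i \to 0$, which is false --- since $g_i = \Delta_i^\beta$ and $\Delta_{i+1} \approx \Delta_i^{3/2}$, one has $\Delta_{i+1}^{1-\beta}g_i \approx \Delta_i^{(3-\beta)/2} \to \infty$ for $1<\beta<3/2$. What you actually need (and what the paper uses) is that $\Delta_{i+1}^{1-\beta} \to 0$, so that $2^{10}\gamma\Delta_{i+1}^{1-\beta}\cdot d(z_C,z_{C'})$ is $o(g_i)$ when $d(z_C,z_{C'}) \le g_i/2$; combined with $\Delta_i/g_i \to 0$, the quantity $a$ is $o(g_i)$ and the Pythagorean estimate closes as intended. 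A second, very minor bookkeeping point: you should take the maximal collections of pairwise-adjacent bad $(i)$-cubes globally (as in Definition~\ref{def:Qk}) and then restrict attention to those that meet $R$ (choosing $y_C$ in $C\cap R$), rather than defining clusters only from cubes lying inside $R$; this ensures $h_C^{\min}, h_C^{\max}$ are taken over the full cluster and the adjacency is unambiguous.
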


\begin{figure}[ht]
 \centering
 \includegraphics{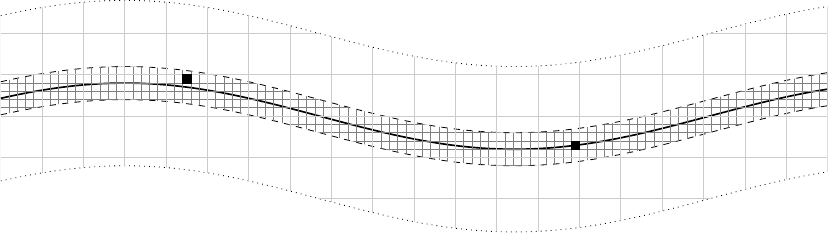}
 \includegraphics{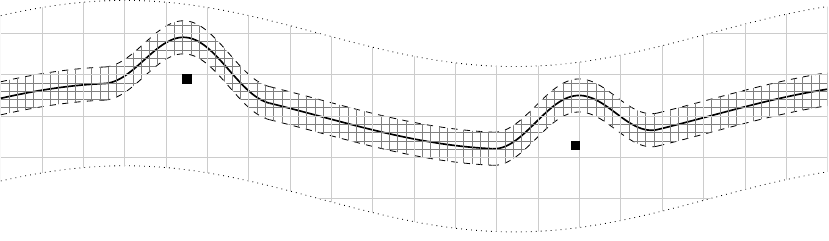}
 \caption{The $(k,u)$-pinch $\Sigma$, given by~\eqref{eq:Sigma}, is the solid black curve through the centre of the upper figure. The set $\Sigma + L_u^{(i+1)}$, bounded by the dotted lines, avoids all bad $(i+1)$-cubes. Also shown are two bad $(i)$-cubes, which lie inside $\Sigma + 4\gamma \cdot L_u^{(i)}$, the set bounded by the dashed lines. Lemma~\ref{lem:construct-pinch-induction} constructs $\Sigma'$, of the form~\eqref{eq:Sigma'}, and shown as the solid black line through the lower figure, such that $\Sigma' + 4\gamma \cdot L_u^{(i)}$ avoids all bad $(i)$-cubes.}
 \label{fig:pinch-induction}
\end{figure}

The idea of the proof is as follows. In order to construct $Z_i$, we take a point from each maximal collection of pairwise-adjacent bad $(i)$-cubes whose union intersects $\Sigma + 4\gamma  \cdot L_u^{(i)}$, and project those points (orthogonally) onto~$\{u\}^\perp$. This pulls the surface $\Sigma'$ away (locally) from the bad $(i)$-cubes, causing it to divert around them. We use the assumption that all $(i+1)$-cubes intersecting $\Sigma + L_u^{(i+1)}$ are good to show that this set is $i$-separated, and then again to show that every $(i)$-cube that intersects $\Sigma' + 4\gamma  \cdot L_u^{(i)}$ is good.

\begin{proof}[Proof of Lemma~\ref{lem:construct-pinch-induction}]
Let us choose, for each maximal collection of pairwise-adjacent bad $(i)$-cubes whose union $P$ intersects $\Sigma + 4\gamma  \cdot L_u^{(i)}$, an arbitrary point 
\begin{equation}\label{eq:choosing:xP}
y_P \in P \cap \big( \Sigma + 4\gamma  \cdot L_u^{(i)} \big).
\end{equation} 
Let $Y$ be the set of all such points $y_P$, and let $Z_i$ be the orthogonal projection of $Y$ onto~$\{u\}^\perp$. We shall prove, in the next two claims, that $Z_i$ has the required properties.

\begin{claim}\label{clm:ind1}
$Z_i$ is $i$-separated.
\end{claim}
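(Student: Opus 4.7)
I would prove the claim by contradiction: suppose there exist distinct $z, z' \in Z_i$ with $d(z, z') \leq g_i/2$, and derive a bad $(i+1)$-cube intersecting $\Sigma + L_u^{(i+1)}$, contradicting the hypothesis of the lemma. By construction, these points are the orthogonal projections onto $\{u\}^\perp$ of points $y_P \in P$ and $y_{P'} \in P'$ chosen in~\eqref{eq:choosing:xP}, where $P \neq P'$ are two distinct maximal clusters of pairwise-adjacent bad $(i)$-cubes. The first step is to bound $d(y_P, y_{P'})$ in $\R^d$.

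Since the first $i$ augmentation sets of $\Sigma$ are empty, the height function of $\Sigma$ coincides with the partial function $h_{i+1}$ from~\eqref{def:hj}, and Lemma~\ref{lem:pinched-height} gives the Lipschitz bound $|h(z) - h(z')| \leq 2^{10}\gamma \Delta_{i+1}^{1-\beta} \cdot d(z,z')$. Since $\beta > 1$ and $\Delta_{i+1} \geq \Delta_i$, I have $\Delta_{i+1}^{1-\beta} \leq \Delta_i^{1-\beta}$, so the height difference is at most $2^9 \gamma \Delta_i$. Writing $y_P = z + (h(z) + \mu)u$ and $y_{P'} = z' + (h(z') + \mu')u$ with $|\mu|, |\mu'| \leq 4\gamma \Delta_i$ (because $y_P, y_{P'} \in \Sigma + 4\gamma L_u^{(i)}$), the component of $y_P - y_{P'}$ parallel to $u$ is $O(\Delta_i)$ while its orthogonal component has length at most $g_i/2$. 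Since $g_i = \Delta_i^\beta \gg \Delta_i$ for $p$ small, this yields $d(y_P, y_{P'}) \leq g_i$.

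Next, pick bad $(i)$-cubes $Q_1 \subset P$ and $Q_2 \subset P'$ containing $y_P$ and $y_{P'}$ respectively; by the maximality of $P$ and $P'$, they are distinct and non-adjacent. Let $Q$ be the $(i+1)$-cube containing $y_P$. Then $d(Q, Q_1) = 0$, and $d(Q, Q_2) \leq d(y_P, y_{P'}) \leq g_i$, so Definition~\ref{def:goodcubes} forces $Q$ to be bad. On the other hand, $4\gamma \Delta_i \leq \Delta_{i+1}$ for $p$ sufficiently small, so $y_P \in \Sigma + 4\gamma L_u^{(i)} \subset \Sigma + L_u^{(i+1)}$, i.e., $Q$ intersects $\Sigma + L_u^{(i+1)}$, contradicting the assumption that every such $(i+1)$-cube is good.

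The main subtlety is controlling the $u$-component of $y_P - y_{P'}$. Because $\Sigma$ has been augmented only at scales $\geq i+1$, the slope of its height function is controlled by $\Delta_{i+1}^{1-\beta}$, which, thanks to $\beta > 1$, is small enough that a horizontal displacement of $g_i/2$ produces only an $O(\Delta_i)$ vertical fluctuation — negligible compared to $g_i$. If a $Z_i$-layer had already been incorporated, the Lipschitz constant would degrade to $\Delta_i^{1-\beta}$ and the vertical fluctuation could itself be of order $g_i$, spoiling the argument; this is precisely why $Z_i$ is constructed only after the larger-scale augmentations have been fixed and separation verified.
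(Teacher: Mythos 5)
Your argument follows essentially the same route as the paper's (bound the $u$-component of $y_P - y_{P'}$ by the Lipschitz estimate of Lemma~\ref{lem:pinched-height}, bound the perpendicular component, conclude that the two centres give two nearby bad $(i)$-cubes, and hence a bad $(i+1)$-cube meeting $\Sigma + L_u^{(i+1)}$), recast as a proof by contradiction. The Lipschitz estimate and the height computation are correct, and the final contradiction via the hypothesis on $(i+1)$-cubes is the right mechanism.

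There is, however, a genuine gap at the sentence ``pick bad $(i)$-cubes $Q_1 \subset P$ and $Q_2 \subset P'$ containing $y_P$ and $y_{P'}$ respectively; by the maximality of $P$ and $P'$, they are distinct and non-adjacent.'' Maximality of $P$ and $P'$ does \emph{not} force the two specific cubes $Q_1$ and $Q_2$ to be non-adjacent, or even distinct. Two distinct maximal collections of pairwise-adjacent bad $(i)$-cubes can certainly contain a pair of mutually adjacent cubes, one from each (they can even share a cube): maximality only tells you that $P' \not\subset P$, hence that \emph{some} pair of cubes $Q_1' \in P$, $Q_2' \in P'$ is non-adjacent --- not that the particular pair containing $y_P, y_{P'}$ is. If $Q_1$ and $Q_2$ happen to be adjacent, the pair $(Q_1, Q_2)$ does not witness $Q$ being bad under Definition~\ref{def:goodcubes}, and your step 7 does not go through as written. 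The paper confronts exactly this possibility: it separates into the cases $d(Q,Q_2) > g_i$ and ``$Q_1$, $Q_2$ adjacent'', and in the adjacent case it produces non-adjacent $Q_1' \in P_1$ and $Q_2' \in P_2$ and verifies $d(Q,Q_j') \le 2\sqrt d\cdot\Delta_i < g_i$. Your bound $d(y_P,y_{P'}) \le g_i/2 + O(\Delta_i)$ gives you ample slack to do the same (replacing $Q_1,Q_2$ by non-adjacent $Q_1' \in P$, $Q_2' \in P'$ costs only an additional $\sqrt d\cdot\Delta_i$ each, keeping both distances below $g_i$), so the fix is routine, but it is a step you need to supply rather than dispose of by invoking maximality.
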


\begin{clmproof}{clm:ind1}
Let $x,y \in Y$ with $x \ne y$, and let $a \in \R$ and $z \in \{u\}^\perp$ be such that $x - y = a u + z$, so  
\begin{equation}\label{eq:xyaz}
a = \< x - y, u \> \qquad \text{and} \qquad \|z\| \ge d(x,y) - |a|.
\end{equation}
We shall show that $d(x,y) \ge g_i$ and $|a| \le ( \| z \| + g_i) /4$, and hence $\| z \| > g_i / 2$.

To bound $d(x,y)$, we use our assumption that all $(i+1)$-cubes intersecting $\Sigma + L_u^{(i+1)}$ are good. Note first that $x \in \Sigma + L_u^{(i+1)}$, since $4\gamma  \cdot L_u^{(i)} \subset L_u^{(i+1)}$, by~\eqref{def:Dk:gk} and~\eqref{eq:buffer}. Therefore, the $(i+1)$-cube $Q$ containing $x$ is good. Moreover, the $(i)$-cubes $Q_1$ and $Q_2$ containing $x$ and $y$ (respectively) are both bad, since $x$ and $y$ are each contained in unions of pairwise-adjacent bad $(i)$-cubes. Since $Q_1 \subset Q$, it follows from Definition~\ref{def:goodcubes} that either $d(Q,Q_2) > g_i$, or $Q_1$ and $Q_2$ are adjacent. 

If $d(Q,Q_2) > g_i$ then we are done, since $x \in Q$ and $y \in Q_2$, so $d(x,y) \ge d(Q,Q_2)$. On the other hand, if $Q_1$ and $Q_2$ are adjacent then, since $x$ and $y$ are contained in distinct maximal collections of pairwise-adjacent bad $(i)$-cubes, $P_1$ and $P_2$, it follows that there exist bad $(i)$-cubes $Q_1' \in P_1$ and $Q_2' \in P_2$ such that $Q_1'$ and $Q_2'$ are non-adjacent. Since, for each $j \in \{1,2\}$, the $(i)$-cubes $Q_j$ and $Q_j'$ are adjacent (by definition of $P_j$), we have
\[
d(Q,Q'_j) \leq d(Q,Q_1) + \sqrt{d} \cdot \Delta_i + d(Q_1,Q_j') \le d(Q,Q_1) + 2 \sqrt{d} \cdot \Delta_i = 2 \sqrt{d} \cdot \Delta_i < g_i.
\]
Here, the second step holds because $Q_1$ is adjacent to $Q_1'$ and to $Q_2$, and $Q_2$ is adjacent to $Q_2'$, the third because $Q_1 \subset Q$, and the fourth by~\eqref{def:Dk:gk}. By Definition~\ref{def:goodcubes}, this contradicts our assumption that $Q$ is good, and so proves that $d(x,y) \ge g_i$, as claimed. 

In order to bound $|a|$, let $x'$ and $y'$ be (respectively) the orthogonal projections of $x$ and $y$ onto~$\{u\}^\perp$, and recall that $x,y \in \Sigma + 4\gamma  \cdot L_u^{(i)}$. It follows, by~\eqref{eq:buffer}, that 
$$|\< x - y, u \> | \le |h_{i+1}(x') - h_{i+1}(y')| + 8\gamma  \cdot \Delta_i.$$
Using Lemma~\ref{lem:pinched-height} to bound $|h_{i+1}(x') - h_{i+1}(y')|$, it follows that 
\begin{equation}\label{eq:a:bound}
|a| = | \< x - y, u \> | \le 2^{10} \gamma \Delta_{i+1}^{1 - \beta} \cdot \| x' - y' \| + 8\gamma  \cdot \Delta_i \le \frac{\|z\| + g_i}{4},
\end{equation}
where the final inequality holds since $\| x' - y' \| = \|z\|$ and by~\eqref{def:Dk:gk}, recalling that $\beta > 1$. 

We have shown that $d(x,y) \ge g_i$ and $|a| \le ( \| z \| + g_i) /4$, and it follows that
$$\| z \| \ge d(x,y) - |a| \ge \frac{3g_i}{4} - \frac{\| z \|}{4},$$
and hence $\| z \| > g_i / 2$. Since $x$ and $y$ were arbitrary elements of $Y$, it follows that $Z_i$ is $i$-separated, as claimed.
\end{clmproof}

Recall that the $(k,u)$-pinch $\Sigma'$ was defined in~\eqref{eq:Sigma'}. To complete the proof of the lemma, it only remains to prove the following claim.

\begin{claim}\label{clm:ind2}
Every $(i)$-cube intersecting $\Sigma' + 4\gamma  \cdot L_u^{(i)}$ is good.
\end{claim}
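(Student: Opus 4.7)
I would argue by contradiction: suppose $q \in Q \cap (\Sigma' + 4\gamma L_u^{(i)})$ for some bad $(i)$-cube $Q$, and let $P$ denote the maximal cluster of pairwise-adjacent bad $(i)$-cubes containing $Q$. Let $q'$ denote the orthogonal projection of $q$ onto $\{u\}^\perp$ and $q_u := \< q, u \>$, so $q = q' + q_u u$. The plan is to split on whether $P$ contributes a point to $Z_i$ (equivalently, whether $P$ meets $\Sigma + 4\gamma L_u^{(i)}$). In the first case, $\Sigma'$ will have been pushed up sufficiently far by the bump at $z_P := \pi(y_P)$ to leave $q$ outside the new band; in the second case, the capture of $q$ by $\Sigma'$ must be due to a bump coming from a different cluster $P_0$, which will turn out to force the $(i+1)$-cube $\hat Q$ containing $Q$ to be bad.

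As a preliminary, Lemma~\ref{lem:pinched-height} yields $\|h - h_{i+1}\|_\infty \le 2^4 \gamma \Delta_i$, so for $p$ small we have $\Sigma' + 4\gamma L_u^{(i)} \subset \Sigma + 20\gamma L_u^{(i)} \subset \Sigma + L_u^{(i+1)}$, using $20\gamma\Delta_i \ll \Delta_{i+1}$. Thus $\hat Q$ meets $\Sigma + L_u^{(i+1)}$ and so is good by hypothesis; this is the assertion I will contradict in the harder case. Throughout, I will also use that $Z_i$ is $i$-separated (Claim~\ref{clm:ind1}), so at any fixed point at most one summand in $h' - h_{i+1}$ is nonzero.

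In the first case, $z_P \in Z_i$. Since the diameter of $P$ is at most $2\sqrt d \Delta_i$, the horizontal distance $\|q' - z_P\|$ is negligible compared to $g_i$, so the bump at $z_P$ contributes essentially the full $16\gamma \Delta_i$ to $h'(q')$. Meanwhile, since $y_P \in \Sigma + 4\gamma L_u^{(i)}$, $\|q - y_P\| \le 2\sqrt d \Delta_i$, and the Lipschitz constant $2^{10}\gamma\Delta_{i+1}^{1-\beta}$ of $h_{i+1}$ tends to $0$ as $p \to 0$ (since $\beta > 1$), we obtain $|q_u - h_{i+1}(q')| \le (4\gamma + 2\sqrt d + o(1))\Delta_i$. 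Because $\gamma > 2\sqrt d + 1$, combining these gives $|q_u - h'(q')| > 4\gamma\Delta_i$, contradicting $q \in \Sigma' + 4\gamma L_u^{(i)}$.

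The second case, where $P \cap (\Sigma + 4\gamma L_u^{(i)}) = \emptyset$, is where I expect the main obstacle to lie: $P$ contributes nothing to $Z_i$, yet the assumption $q \in \Sigma' + 4\gamma L_u^{(i)}$ together with the $i$-separation of $Z_i$ forces some $z_{P_0} \in Z_i$ with $P_0 \ne P$ to lie within $g_i\pi/2^6 < g_i/20$ of $q'$. I then plan to bound $\|q - y_{P_0}\|$ by combining: the horizontal gap $\|q' - z_{P_0}\| < g_i/20$; the vertical bounds $|q_u - h_{i+1}(q')| \le 20\gamma\Delta_i$ (using $q \in \Sigma' + 4\gamma L_u^{(i)}$ and $h' - h_{i+1} \le 16\gamma\Delta_i$) and $|(y_{P_0})_u - h_{i+1}(z_{P_0})| \le 4\gamma\Delta_i$; and the Lipschitz error $|h_{i+1}(q') - h_{i+1}(z_{P_0})| \le 2^{10}\gamma\Delta_{i+1}^{1-\beta}\cdot g_i/20$, which is the delicate term. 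Since $\Delta_{i+1}^{1-\beta}\, g_i = O(\Delta_i^{3/2-\beta/2})$ and $\beta > 1$, this error is $o(g_i)$, yielding $\|q - y_{P_0}\| < g_i$ for $p$ small. Letting $Q_0$ be the bad $(i)$-cube containing $y_{P_0}$, the cubes $Q$ and $Q_0$ are non-adjacent (lying in different clusters) and both lie within distance $g_i$ of $\hat Q$, so Definition~\ref{def:goodcubes} makes $\hat Q$ bad, contradicting the preliminary observation.
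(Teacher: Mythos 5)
Your overall strategy is the same as the paper's, and the two cases correspond exactly (the paper calls your $Q$, $Q_0$, $\hat Q$ by the names $Q_1$, $Q_2$, $Q$). Case~1 is sound: the bump at $z_P$ lifts $h'(q')$ by nearly $2^4\gamma\Delta_i$ while $q_u$ stays within $(4\gamma+2\sqrt d+o(1))\Delta_i$ of $h_{i+1}(q')$, so the two are too far apart for $q$ to lie in $\Sigma'+4\gamma L_u^{(i)}$; this is the paper's estimate up to inessential constants. The preliminary inclusion $\Sigma'+4\gamma L_u^{(i)}\subset\Sigma+L_u^{(i+1)}$ and the numerics in Case~2 (in particular the Lipschitz-over-$g_i$ term being $o(g_i)$ because $\beta>1$) are also all correct.

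The gap is in the last line of Case~2. You assert that $Q$ and $Q_0$ are non-adjacent \emph{because} they lie in different maximal clusters, but this does not follow. Maximality of $P$ only says that $Q_0$ fails to be adjacent to \emph{some} cube of $P$; it does not prevent $Q_0$ from being adjacent to $Q$ itself. (Indeed, accommodating such touching clusters of bad cubes is precisely why Definition~\ref{def:goodcubes} is phrased in terms of a \emph{non-adjacent} pair rather than any pair.) Without non-adjacency, Definition~\ref{def:goodcubes} does not let you conclude that $\hat Q$ is bad, so the contradiction is not yet reached. The fix is short and is what the paper does: $Q_0\notin P$ (since $y_{P_0}\in\Sigma+4\gamma L_u^{(i)}$ but $P$ is disjoint from that set), so by maximality of $P$ there is a bad $(i)$-cube $Q'\subset P$ with $Q'$ not adjacent to $Q_0$. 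Then $d(\hat Q,Q')\le d(Q,Q')=0$ since $Q,Q'$ are adjacent and $Q\subset\hat Q$, and together with $d(\hat Q,Q_0)<g_i$ this exhibits a non-adjacent bad pair witnessing that $\hat Q$ is bad. With this subcase inserted, your proof is complete and matches the paper's.
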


\begin{clmproof}{clm:ind2}
Suppose, for a contradiction, that $Q_1$ is a bad $(i)$-cube that intersects $\Sigma' + 4\gamma  \cdot L_u^{(i)}$, and let $P$ be the union of the maximal collection of pairwise-adjacent bad $(i)$-cubes containing $Q_1$. Suppose first that $P$ also intersects $\Sigma + 4\gamma  \cdot L_u^{(i)}$, in which case there exists $z \in Z_i$ such that $z$ is the orthogonal projection of $y_P$ onto $\{u\}^\perp$.

Let $x,y \in P$, with $x \in \Sigma' + 4\gamma  \cdot L_u^{(i)}$ and $y \in \Sigma + 4\gamma  \cdot L_u^{(i)}$, and observe that 
\begin{equation}\label{eq:ind2:upper}
|\< x - y, u \>| \le 2\sqrt{d} \cdot \Delta_i,
\end{equation}
by the definition of $P$. On the other hand, we have
\begin{equation}\label{eq:ind2:lower1}
\< x - y, u \> \ge h_i(x') - h_{i+1}(y') - 8\gamma  \cdot \Delta_i,
\end{equation}
where $x'$ and $y'$ are (as in the proof of Claim~\ref{clm:ind1}) the orthogonal projections of $x$ and $y$ onto~$\{u\}^\perp$, since $x \in \Sigma' + 4\gamma  \cdot L_u^{(i)}$ and $y \in \Sigma + 4\gamma  \cdot L_u^{(i)}$, and by Definition~\ref{def:pinch} and~\eqref{eq:buffer}. 

Now, since $x'$ and $z$ are both orthogonal projections of points of $P$ onto~$\{u\}^\perp$, we have $d(x',z) \le 2\sqrt{d} \cdot \Delta_i < g_i / 4$. Since $Z_i$ is $i$-separated, it follows that for every $z \ne z' \in Z_i$ we have $d(x',z') > g_i/4$, and hence $c\big( 2^5 \cdot d(x',z')/g_i \big) = 0$ (cf.~the proof of Lemma~\ref{lem:pinched-height}). Thus, by~\eqref{def:hj},
\begin{equation}\label{eq:ind2:lower2}
h_i(x') - h_{i+1}(x') = 2^4 \gamma \Delta_i \cdot c\bigg( \frac{2^5 \cdot d(x',z)}{g_i} \bigg) \ge 9\gamma \cdot \Delta_i,
\end{equation}
where the final inequality follows from $d(x',z) \le 2\sqrt{d} \cdot \Delta_i$ and~\eqref{def:Dk:gk}. Moreover, by Lemma~\ref{lem:pinched-height}, we have 
\begin{equation}\label{eq:ind2:lower3}
|h_{i+1}(x') - h_{i+1}(y')| \le 2^{10} \gamma \Delta_{i+1}^{1 - \beta} \cdot \| x' - y' \| \le \Delta_i,
\end{equation}
where the last inequality holds since $x'$ and $y'$ are both orthogonal projections of points of $P$ onto~$\{u\}^\perp$, so $\| x' - y' \| \le 2\sqrt{d} \cdot \Delta_i$, and recalling that $\beta > 1$. 

Thus, combining~\eqref{eq:ind2:lower1},~\eqref{eq:ind2:lower2} and~\eqref{eq:ind2:lower3}, it follows that $\< x - y, u \> \ge (\gamma - 1) \Delta_i$, which contradicts~\eqref{eq:ind2:upper} since $\gamma$ was chosen to be sufficiently large. This contradiction proves that $P$ cannot intersect $\Sigma + 4\gamma  \cdot L_u^{(i)}$.

So suppose now that $P \cap \big( \Sigma + 4\gamma  \cdot L_u^{(i)} \big) = \emptyset$, which means that there is no element of $Z_i$ corresponding to $P$. In this case we again use our assumption that all $(i+1)$-cubes intersecting $\Sigma + L_u^{(i+1)}$ are good, this time to obtain a contradiction. 

To begin, recall that $Q_1$ is a bad $(i)$-cube (contained in $P$) that intersects $\Sigma' + 4\gamma  \cdot L_u^{(i)}$, and let $Q$ be the $(i+1)$-cube containing $Q_1$. Observe that 
\[
\Sigma' + 4\gamma  \cdot L_u^{(i)} \subset \Sigma + L_u^{(i+1)},
\]
since $\| h_i - h_{i+1} \|_\infty \le 2^4 \gamma \cdot \Delta_i$, by Lemma~\ref{lem:pinched-height}, and $(2^4 + 4)\gamma \cdot \Delta_i \le \Delta_{i+1}$, by~\eqref{def:Dk:gk}. Since $Q_1 \subset Q$, it follows that $Q$ intersects $\Sigma + L_u^{(i+1)}$. Therefore, by Definition~\ref{def:goodcubes}, it suffices to show that there exists a bad $(i)$-cube $Q_2$, not adjacent to $Q_1$, with $d(Q,Q_2) \le g_i$. 

To do so, let $x \in Q_1 \cap \big( \Sigma' + 4\gamma  \cdot L_u^{(i)} \big)$, and observe that $h_i(x') \neq h_{i+1}(x')$, where $x'$ is the projection of $x$ onto $\{u\}^\perp$, since $x \in Q_1 \subset P$, and we assumed that $P$ does not intersect $\Sigma + 4\gamma  \cdot L_u^{(i)}$. It follows, by~\eqref{def:hj}, that there exists $z \in Z_i$ with $d(x',z) < 2^{-6} \pi \cdot g_i$. Let $y \in Y$ be such that $z$ is the orthogonal projection of $y$ onto~$\{u\}^\perp$, and let $Q_2$ be the $(i)$-cube containing $y$. Recall from~\eqref{eq:choosing:xP} that $Q_2$ is bad, and that $y \in \Sigma + 4\gamma  \cdot L_u^{(i)}$.

We now claim that 
\begin{equation}\label{eq:ind2-QQ2}
d(Q,Q_2) \le d(x,y) \le d(x',z) + |\< x - y, u \>| \le g_i.
\end{equation}
The first step holds since $x \in Q_1 \subset Q$ and $y \in Q_2$, and the second since $x'$ and $z$ are the orthogonal projections of $x$ and $y$ onto~$\{u\}^\perp$. Since $d(x',z) < 2^{-6} \pi \cdot g_i$, it is enough to show that $|\< x - y, u \>| \le g_i/2$. This follows since $x \in \Sigma' + 4\gamma  \cdot L_u^{(i)}$ and $y \in \Sigma + 4\gamma  \cdot L_u^{(i)}$, so
$$|\< x - y, u \>| \le |h_i(x') - h_{i+1}(z)| + 8\gamma  \cdot \Delta_i,$$
and by Lemma~\ref{lem:pinched-height} we have
\begin{align*}
|h_i(x') - h_{i+1}(z)| & \, \le \, |h_i(x')- h_{i+1}(x')| + |h_{i+1}(x') - h_{i+1}(z)| \\
& \, \le \, 2^4 \gamma \Delta_i + 2^{10} \gamma \Delta_{i+1}^{1 - \beta} \cdot \| x' - z \| \le g_i/4,
\end{align*}
where in the final step we used the bounds $\| x' - z \| = d(x',z) < 2^{-6} \pi \cdot g_i$ and $\beta > 1$. It follows that $|\< x - y, u \>| \le g_i/4 + 8 \gamma  \cdot \Delta_i \le g_i/2$, and so~\eqref{eq:ind2-QQ2} holds as claimed. 

We have shown that $Q_1$ and $Q_2$ are bad $(i)$-cubes, with $Q_1 \subset Q$ and $d(Q,Q_2) \le g_i$. Moreover, $Q_1 \ne Q_2$, since $Q_2$ intersects $\Sigma + 4\gamma  \cdot L_u^{(i)}$, and so is not contained in $P$. Therefore, if $Q_1$ and $Q_2$ are non-adjacent $(i)$-cubes, then $Q$ is a bad $(i+1)$-cube that intersects $\Sigma + L_u^{(i+1)}$, and we have the claimed contradiction. 

Finally, suppose that $Q_1$ and $Q_2$ are adjacent bad $(i)$-cubes. Then, since $P$ is maximal and $Q_2 \not\subset P$, there exists a bad $(i)$-cube $Q_1' \subset P$ that is not adjacent to $Q_2$. Since $d(Q_1',Q) \le d(Q_1',Q_1) = 0$, we again deduce that $Q$ is bad, as required.
\end{clmproof}

By Claims~\ref{clm:ind1} and~\ref{clm:ind2}, the set $Z_i \subset \{u\}^\perp$ is $i$-separated, and every $(i)$-cube intersecting $\Sigma' + 4\gamma  \cdot L_u^{(i)}$ is good, so the lemma follows.
\end{proof}

We may now complete the proof of Lemma~\ref{lem:construct-pinch} via a straightforward induction. 

\begin{proof}[Proof of Lemma~\ref{lem:construct-pinch}]
If $k = 0$ then we may take $\Sigma = \Pi$ and there is nothing to prove, so suppose that $k \ge 1$. We claim first that there exists a $(k,u)$-pinch
\[
\Sigma = \Sigma(u,\lambda;Z_1,\dots,Z_k),
\]
with base $\Pi$, such that for every $1 \leq i \leq k$, every $(i)$-cube intersecting $\Sigma_i + 4\gamma  \cdot L_u^{(i)}$ is good, where
\[
\Sigma_i := \Sigma(u,\lambda;\emptyset,\dots,\emptyset,Z_i,\dots,Z_k).
\]
We choose the sets $Z_1,\dots,Z_k$ in reverse order, inductively, using Lemma~\ref{lem:construct-pinch-induction}. For the base case of the induction, when $i = k$, we use our assumption that every $(k+1)$-cube intersecting $\Pi + L_u^{(k+1)}$ is good. By Lemma~\ref{lem:construct-pinch-induction}, it follows that there exists a $k$-separated set $Z_k \subset \{u\}^\perp$ such that every $(k)$-cube intersecting $\Sigma_k + 4\gamma  \cdot L_u^{(k)}$ is good. For the induction step, assume that every $(i+1)$-cube intersecting $\Sigma_{i+1} + 4\gamma  \cdot L_u^{(i+1)}$ is good, and note that $4\gamma  > 1$. By Lemma~\ref{lem:construct-pinch-induction}, there exists an $i$-separated set $Z_i \subset \{u\}^\perp$ such that every $(i)$-cube intersecting $\Sigma_i + 4\gamma  \cdot L_u^{(i)}$ is good, as required.

It remains to prove that, for each $1 \leq i \leq k$, every $(i)$-cube intersecting $\Sigma + 3\gamma \cdot L_u^{(i)}$ is good. Since every $(i)$-cube intersecting $\Sigma_i + 4\gamma  \cdot L_u^{(i)}$ is good, it is enough to show that
\[
\Sigma + 3\gamma \cdot L_u^{(i)} \subset \Sigma_i + 4\gamma  \cdot L_u^{(i)}.
\]
To see that this holds, simply observe that
\[
\| h_1 - h_i \|_\infty \le 2^4 \gamma \sum_{j=1}^{i-1} \Delta_j \le 2^5 \gamma \cdot \Delta_{i-1} < \gamma \cdot \Delta_i,
\]
by Lemma~\ref{lem:pinched-height} and the triangle inequality, as required.
\end{proof}

\section{Construction of covers}\label{sec:covers}

To complete the proof of Proposition~\ref{prop:exists-T}, we shall show that one can cover each cluster of bad $(k)$-cubes by intersections of $(k,u)$-ranges with $u \in \str$, and observe that these intersections are $\U$-closed and well-separated from one another.

Let us fix the (arbitrary) set $A \subset \Z^d$ that appears in the statement of Proposition~\ref{prop:exists-T}, and set $\QQ_k := \QQ_k(A)$. Recall from Definition~\ref{def:Qk} that for each $Q \in \QQ_k$ we fix an element $x_Q \in Q$. We shall write $\edge T$ for the boundary of a set $T \subset \R^d$.

\begin{definition}\label{def:cover}
Let $k \ge 1$ and let $Q \in \QQ_k$. A \emph{$(k)$-cover} of~$Q$ is a set 
\[
T_k(Q) := \bigcap_{u \in \str} \Xi_u,
\]
where $\{\Xi_u : u \in \str\}$ is a set of $(k-1,u)$-ranges with bases
\begin{equation}\label{eq:cover-Piu}
\Pi_u := \big\{ x \in \R^d : \<x - x_Q, u\> = 3d \cdot \Delta_k \big\},
\end{equation}
such that 
\[
d\big( Q',\edge T_k(Q) \big) \geq 2\gamma \cdot \Delta_i
\]
for every $1 \leq i \leq k$ and every bad $(i)$-cube $Q'$, unless $i = k$ and $Q' \subset Q$.
\end{definition}

The first step is to use Lemma~\ref{lem:construct-pinch} to show that $(k)$-covers exist.

\begin{lemma}\label{lem:covers-exist}
For every $k \geq 1$ and $Q \in \QQ_k$, there exists a $(k)$-cover $T_k(Q)$ of $Q$ with 
\begin{equation}\label{eq:covers-small}
Q \subset T_k(Q) \subset B_{\gamma \Delta_k}(x_Q).
\end{equation}
\end{lemma}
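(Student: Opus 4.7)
The plan is to produce, for each $u\in\str$, a $(k-1,u)$-pinch $\Sigma_u$ with base $\Pi_u$ (as defined by~\eqref{eq:cover-Piu}), and to set $T_k(Q):=\bigcap_{u\in\str}\Xi_u$ where $\Xi_u$ is the associated range. For each $u$ I would appeal to Lemma~\ref{lem:construct-pinch} (with $k$ there replaced by $k-1$) to obtain a pinch such that every $(i)$-cube intersecting $\Sigma_u+3\gamma\cdot L_u^{(i)}$ is good, for each $1\le i\le k-1$. The crucial input is that $Q\in\QQ_k$ intersects a good $(k+1)$-cube $Q^*$: by Definition~\ref{def:goodcubes} every pair of bad $(k)$-cubes within distance $g_k$ of $Q^*$ must be adjacent, so by maximality of $Q$ all bad $(k)$-cubes within $g_k$ of $Q^*$ lie in $Q$; and bad $(k)$-cubes in $Q$ satisfy $\<Q-x_Q,u\>\subset [-2\sqrt d\,\Delta_k,2\sqrt d\,\Delta_k]$, which for $d\ge 2$ is disjoint from $[(3d-1)\Delta_k,(3d+1)\Delta_k]$, so they miss the slab $\Pi_u+L_u^{(k)}$ entirely.

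With the ranges $\Xi_u$ in hand, I would verify the three properties required by the lemma and Definition~\ref{def:cover}. For the upper bound $T_k(Q)\subset B_{\gamma\Delta_k}(x_Q)$, Lemma~\ref{lem:pinched-height} implies that each $\Xi_u$ lies in the half-space $\{x:\<x-x_Q,u\>\le 3d\Delta_k+2^5\gamma\Delta_{k-1}\}\subset\{x:\<x-x_Q,u\>\le 4d\Delta_k\}$ (since $\Delta_{k-1}\ll\Delta_k$), and intersecting over $u\in\str$ and rescaling~\eqref{eq:gamma} by $\Delta_k$ yields the claim. For the containment $Q\subset T_k(Q)$, any $y\in Q$ satisfies $\<y-x_Q,u\>\le 2\sqrt d\,\Delta_k<3d\Delta_k-2^5\gamma\Delta_{k-1}$, so $y$ lies strictly below the surface of each $\Sigma_u$. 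For the separation condition in Definition~\ref{def:cover}, note that $\edge T_k(Q)\subset\bigcup_{u\in\str}\Sigma_u$: for $1\le i\le k-1$ the bound $d(Q',\edge T_k(Q))\ge 2\gamma\Delta_i$ follows from the conclusion of Lemma~\ref{lem:construct-pinch}, and for $i=k$ with $Q'\not\subset Q$ it follows because such $Q'$ lie at distance $\ge g_k/2\gg 2\gamma\Delta_k$ from $Q$ and hence from $B_{\gamma\Delta_k}(x_Q)\supset T_k(Q)$.

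The main technical obstacle is the appeal to Lemma~\ref{lem:construct-pinch}: its hypothesis asks that \emph{every} $(k)$-cube in the infinite slab $\Pi_u+L_u^{(k)}$ be good, yet bad $(k)$-clusters outside $Q$ can lie in this slab far from $x_Q$ in directions perpendicular to $u$, so the hypothesis may fail globally. Since $T_k(Q)$ is confined to $B_{\gamma\Delta_k}(x_Q)$, however, only the behaviour of $\Sigma_u$ near $x_Q$ enters the verification above, and I expect this to be handled by a localized variant of the inductive argument in Lemma~\ref{lem:construct-pinch-induction}: restrict the augmentations $Z_i$ to points coming from bad clusters inside a ball (say $B_{2\gamma\Delta_k}(x_Q)$), so that both the $i$-separation claim and the goodness conclusion hold inside this ball, which is all that is needed.
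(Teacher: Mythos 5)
Your overall structure matches the paper's: build a $(k-1,u)$-pinch with base $\Pi_u$ for each $u\in\str$, take the intersection of the ranges, and verify the containments from the nesting $Q \subset x_Q + 3\tilde{T} \subset T_k(Q) \subset x_Q + 4\tilde{T} \subset B_{\gamma\Delta_k}(x_Q)$ and the boundary-distance condition. You also correctly identify the central obstacle --- that Lemma~\ref{lem:construct-pinch} demands goodness of \emph{every} $(k)$-cube meeting the infinite slab $\Pi_u+L_u^{(k)}$, which cannot be guaranteed. However, your proposed fix is both different from the paper's and incomplete. The paper does not touch the internals of Lemma~\ref{lem:construct-pinch-induction}; instead it simply \emph{replaces the initially infected set} by the truncation $A' := A\cap B_{2\gamma\Delta_k}(x_Q)$ and applies Lemma~\ref{lem:construct-pinch} verbatim. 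Every $(k)$-cube in the slab is then automatically good with respect to $A'$: either it lies in $B_{3\gamma\Delta_k}(x_Q)$ and one argues (via $Q\in\QQ_k$ meeting a good $(k{+}1)$-cube) that it is already good with respect to $A$, or it lies outside $B_{2\gamma\Delta_k}(x_Q)$ and so contains no points of $A'$ at all. By contrast, your ``restrict the augmentations $Z_i$'' variant requires re-proving Lemma~\ref{lem:construct-pinch-induction} with a localized hypothesis and conclusion, and you do not address the genuine delicacies that arise (e.g.\ the $i$-separation argument in Claim~\ref{clm:ind1} leans on goodness of $(i{+}1)$-cubes, and a bad cluster near the ball's boundary may straddle it).

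More importantly, you omit a non-trivial step that the paper must (and does) carry out: after the pinch is built relative to the truncated/localized data, one has to show that the $(i)$-cubes within $2\gamma\Delta_i$ of $\partial T_k(Q)$ are good \emph{with respect to the original} $A$, not merely with respect to $A'$ (or the localized augmentation). The paper handles this in Claim~\ref{clm:covers2} via the finite ``sphere of influence'' bound from the proof of Lemma~\ref{lem:k2ind}: the state of an $(i)$-cube near $\partial T_k(Q) \subset B_{\gamma\Delta_k}(x_Q)$ depends only on sites within distance $\lesssim\Delta_i/3$, hence only on sites in $B_{2\gamma\Delta_k}(x_Q)$, so goodness w.r.t.\ $A'$ and w.r.t.\ $A$ coincide there. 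Your proposal asserts the boundary-distance property ``follows from the conclusion of Lemma~\ref{lem:construct-pinch}'' without confronting this transfer, so as written there is a gap. (The rest --- the bound on $T_k(Q)$ using Lemma~\ref{lem:pinched-height} and~\eqref{eq:gamma}, the containment $Q\subset T_k(Q)$ from non-negativity of the height function, and the $i=k$ case using the $g_k/2$ separation --- is correct and in line with the paper.)
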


\begin{proof}
Fix $k \ge 1$ and $Q \in \QQ_k$. For each $u \in \str$, we shall apply Lemma~\ref{lem:construct-pinch} to the hyperplane $\Pi_u$ defined in~\eqref{eq:cover-Piu}, with the set of infected sites being $A' := A \cap B_{2\gamma \Delta_k}(x_Q)$. We shall then take $T_k(Q)$ to be the intersection of the associated ranges.

To begin, we claim that every $(k)$-cube intersecting the set 
\[
\X_u := \big( \Pi_u + L_u^{(k)} \big) \cap B_{3\gamma \Delta_k}(x_Q)
\]
is good (with respect to the set $A$). Indeed, suppose that $Q_1$ is a bad $(k)$-cube that intersects $\X_u$. Then $Q_1 \not\subset Q$, since $Q$ does not intersect $\Pi_u + L_u^{(k)}$, by~\eqref{eq:buffer} and~\eqref{eq:cover-Piu}, and recalling that $x_Q \in Q$ and that $Q$ has diameter at most $2\sqrt{d} \cdot \Delta_k < (3d - 1) \cdot \Delta_k$. Now, by the maximality of $Q$ (see Definition~\ref{def:Qk}), it follows that there exists a bad $(k)$-cube $Q_2 \subset Q$ that is not adjacent to $Q_1$. Moreover, since $Q \in \QQ_k$, there exists a good $(k+1)$-cube $Q'$ that intersects $Q$. Observe that
$$\max \big\{ d(Q',Q_1), d(Q',Q_2) \big\} \le d(x_Q,Q_1) + 2\sqrt{d} \cdot \Delta_k \le \big( 3\gamma + 2\sqrt{d} \big) \Delta_k \le g_k,$$
since $Q_2 \subset Q$ and $\X_u \subset B_{3\gamma \Delta_k}(x_Q)$, and by~\eqref{def:Dk:gk}. Since $Q_1$ and $Q_2$ are non-adjacent bad $(k)$-cubes, it follows, by Definition~\ref{def:goodcubes}, that $Q'$ is bad, which is a contradiction. This contradiction proves that every $(k)$-cube intersecting $\X_u$ is good, as claimed.

Next we claim that every $(k)$-cube $Q'$ intersecting $\Pi_u + L_u^{(k)}$ is good with respect to the set $A'$. If $Q'$ intersects $\X_u$, then this follows from the claim above, since $A' \subset A$, so every cube that is good with respect to $A$ is also good with respect to $A'$. On the other hand, if $Q'$ does not intersect $\X_u$ then let $x \in Q' \cap \big( \Pi_u + L_u^{(k)} \big)$, and note that $\| x - x_Q \| > 3\gamma \Delta_k$. Since $Q'$ has diameter $2\sqrt{d} \cdot \Delta_k < \gamma \cdot \Delta_k$, it follows that $Q'$ does not intersect the ball $B_{2\gamma \Delta_k}(x_Q)$, and hence contains no point of $A'$. Therefore, in this case $Q'$ is automatically good with respect to $A'$, as claimed.

Applying Lemma~\ref{lem:construct-pinch} to $\Pi_u$ and $A'$, we obtain a $(k-1,u)$-pinch $\Sigma_u$, with base $\Pi_u$, such that for each $1 \leq i \leq k - 1$, every $(i)$-cube intersecting $\Sigma_u + 3\gamma \cdot L_u^{(i)}$ is good with respect to $A'$. We do this for each $u \in \str$, and define 
\[
T_k(Q) := \bigcap_{u \in \str} \Xi_u,
\]
where $\Xi_u$ is the $(k-1,u)$-range with boundary $\Sigma_u$. We shall prove, in the next two claims, that $T_k(Q)$ has the required properties.

\begin{claim}\label{clm:covers1}
$Q \subset T_k(Q) \subset B_{\gamma \Delta_k}(x_Q)$.
\end{claim}

\begin{clmproof}{clm:covers1}
It will be useful to consider the set $\tilde{T} := \bigcap_{u \in \str} \tilde{\Xi}_u$, where
\begin{equation}\label{def:Xi:tilde}
\tilde{\Xi}_u := \big\{ x \in \R^d : \< x,u \> \leq d \cdot \Delta_k \big\}
\end{equation}
for each $u \in \str$. We shall show that
\begin{equation}\label{eq:covers-nesting}
Q \subset x_Q + 3 \cdot \tilde{T} \subset T_k(Q) \subset x_Q + 4 \cdot \tilde{T} \subset B_{\gamma \Delta_k}(x_Q),
\end{equation}
which will imply the claim. Note first that $x_Q + 4 \cdot \tilde{T} \subset B_{\gamma \Delta_k}(x_Q)$ follows immediately from~\eqref{eq:gamma}. To prove the first three containments in~\eqref{eq:covers-nesting}, it is enough to show that
\begin{equation}\label{eq:covers-nesting2}
Q \subset x_Q + 3 \cdot \tilde{\Xi}_u \subset \Xi_u \subset x_Q + 4 \cdot \tilde{\Xi}_u
\end{equation}
for each $u \in \str$. The first containment in~\eqref{eq:covers-nesting2} holds because $x_Q \in Q$ and the diameter of $Q$ is at most $2\sqrt{d} \cdot \Delta_k$, and the second because $\Pi_u$ is the boundary of $x_Q + 3 \cdot \tilde{\Xi}_u$, by~\eqref{eq:cover-Piu} and~\eqref{def:Xi:tilde}, and since $\Sigma_u$ has base $\Pi_u$, and the height functions defined in~\eqref{def:h} are non-negative. Finally, to show that $\Xi_u \subset x_Q + 4 \cdot \tilde{\Xi}_u$, observe that 
$$\<x - x_Q, u\> \le 3d \cdot \Delta_k + 2^5 \gamma \cdot \Delta_{k-1} < 4d \cdot \Delta_k,$$ by~\eqref{eq:cover-Piu} and Lemma~\ref{lem:pinched-height}. This proves~\eqref{eq:covers-nesting2}, and hence proves the claim.
\end{clmproof}

It only remains to show that there are no bad cubes close to the boundary of $T_k(Q)$, except possibly those in $Q$. 

\begin{claim}\label{clm:covers2}
If\/ $1 \le i \le k$ and $Q_1$ is a bad $(i)$-cube with
$$d\big( Q_1,\edge T_k(Q) \big) < 2\gamma \cdot \Delta_i,$$
then $Q_1 \subset Q$ and $i = k$. 
\end{claim}

\begin{clmproof}{clm:covers2}
We shall deal separately with the cases $i < k$ and $i = k$. Beginning with the latter case, suppose that $Q_1$ is a bad $(k)$-cube with $Q_1 \not\subset Q$ and $d(Q_1, T_k(Q)) < 2\gamma \cdot \Delta_k$. Since $T_k(Q) \subset B_{\gamma \Delta_k}(x_Q)$, by Claim~\ref{clm:covers1}, it follows that $d(Q_1,x_Q) < 3\gamma \cdot \Delta_k$, and hence
$$d(Q_1,Q') \le d(Q_1,x_Q) + d(x_Q,Q') < 3\gamma \cdot \Delta_k + 2\sqrt{d} \cdot \Delta_k \le g_k$$
for any $(k+1)$-cube $Q'$ that intersects $Q$. Now, since $Q_1 \not\subset Q$ and by the maximality of $Q$, there exists a bad $(k)$-cube $Q_2 \subset Q$ that is not adjacent to $Q_1$. Noting that $d(Q_2,Q') \le 2\sqrt{d} \cdot \Delta_k \le g_k$, it follows that $Q'$ is bad. Thus, since $Q'$ was an arbitrary $(k+1)$-cube intersecting $Q$, this contradicts our assumption that $Q \in \QQ_k$.  

So suppose that $1 \le i \le k - 1$, let $Q_1$ be an $(i)$-cube with $d(Q_1, \edge T_k(Q)) < 2\gamma \cdot \Delta_i$, and note that therefore $d(Q_1, \Sigma_u) < 2\gamma \cdot \Delta_i$ for some $u \in \str$. We shall use the fact that every $(i)$-cube intersecting $\Sigma_u + 3\gamma \cdot L_u^{(i)}$ is good with respect to $A' = A \cap B_{2\gamma \Delta_k}(x_Q)$, which holds by our choice of $\Sigma_u$. The first step is to show that $Q_1$ intersects $\Sigma_u + 3\gamma \cdot L_u^{(i)}$. To do this, let $x \in Q_1$ and $y \in \Sigma_u$ with $d(x,y) < 2\gamma \cdot \Delta_i$, and let $x = x' + \lambda u$, where $x'\in \Sigma_u$ and $\lambda \in \R$. Observe that 
\[
|\lambda| = |\< x - x', u \>| \le d(x,y) + |\< x' - y, u \>|,
\]
and that, since $x',y \in \Sigma_u$,  
\[
|\< x' - y, u \>| \le 2^{10} \gamma \Delta_1^{1 - \beta} \cdot  d(x,y) \le d(x,y) / 2,
\]
by Lemma~\ref{lem:pinched-height}. Hence $|\lambda| \leq (3/2) \cdot d(x,y)$, and therefore, since $d(x,y) < 2\gamma \cdot \Delta_i$ and $x'\in \Sigma_u$, it follows that $x \in \Sigma_u + 3\gamma \cdot L_u^{(i)}$. Thus, by our choice of $\Sigma_u$, the $(i)$-cube $Q_1$ is good with respect to $A'$. 

To complete the proof, we shall show that $Q_1$ is also good with respect to $A$. To see that this holds, observe first that, as in the proof of Lemma~\ref{lem:k2ind}, the state of $Q_1$ depends only on the intersection of $A$ with the set of $x \in \Z^d$ such that
$$d(x,Q_1) \le \sum_{j = 1}^{i-1} \big( g_j + \sqrt{d} \cdot \Delta_j \big) \le 3 \cdot \Delta_{i-1}^\beta < \Delta_i / 3.$$
Since $d(Q_1, T_k(Q)) < 2\gamma \cdot \Delta_i$ and $T_k(Q) \subset B_{\gamma \Delta_k}(x_Q)$, by Claim~\ref{clm:covers1}, it follows that the state of $Q_1$ depends only on the set of $x \in \Z^d$ such that
$$d(x,x_Q) \le \gamma \cdot \Delta_k + 2\gamma \cdot \Delta_i + 2\sqrt{d} \cdot \Delta_i + \Delta_i / 3 \le 2\gamma \cdot \Delta_k.$$
Since $A' = A \cap B_{2\gamma \Delta_k}(x_Q)$, this proves the claim. 
\end{clmproof}

Combining Claims~\ref{clm:covers1} and~\ref{clm:covers2}, it follows that $T_k(Q)$ is a $(k)$-cover of $Q$, and that the inclusions~\eqref{eq:covers-small} hold, as required.
\end{proof}



Next we note that each individual $(k)$-cover is closed.

\begin{lemma}\label{lem:covers-props}
If\/ $T_k(Q)$ is a $(k)$-cover of\/ $Q \in \QQ_k$, then\/ $T_k(Q)_{\Z}$ is\/ $\U$-closed.
\end{lemma}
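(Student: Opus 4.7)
The plan is to observe that Lemma~\ref{lem:covers-props} is an immediate consequence of Lemma~\ref{lem:range-closed} together with the fact that arbitrary intersections of $\U$-closed subsets of $\Z^d$ are themselves $\U$-closed. The proof should be only a few lines, and I do not anticipate any genuine obstacle.

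First, I would unwind the definition of a $(k)$-cover. Since $T_k(Q) = \bigcap_{u \in \str} \Xi_u$, and intersection with $\Z^d$ distributes over intersection, we have
\[
T_k(Q)_\Z \;=\; T_k(Q) \cap \Z^d \;=\; \bigcap_{u \in \str} (\Xi_u)_\Z.
\]
Each $\Xi_u$ is a $(k-1,u)$-range with $u \in \str$, so by Lemma~\ref{lem:range-closed} the discrete set $(\Xi_u)_\Z$ is $\U$-closed.

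Second, I would record the elementary observation that if $\{B_\alpha\}_{\alpha \in I}$ is any family of $\U$-closed subsets of $\Z^d$, then $B := \bigcap_{\alpha \in I} B_\alpha$ is also $\U$-closed. Indeed, suppose $x \in \Z^d$ and $X \in \U$ satisfy $x + X \subset B$. Then $x + X \subset B_\alpha$ for every $\alpha \in I$, so by the $\U$-closedness of each $B_\alpha$ we have $x \in B_\alpha$ for every $\alpha$, and thus $x \in B$. Equivalently, $B_t \subset B$ for every $t$ when $B_0 = B$.

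Applying this observation to the family $\{(\Xi_u)_\Z : u \in \str\}$ (which is finite, since $\str$ is finite by Lemma~\ref{lem:stronglyexists}) yields that $T_k(Q)_\Z$ is $\U$-closed, completing the proof. The only potential pitfall is making sure that we are intersecting the \emph{discrete} sets $(\Xi_u)_\Z$ rather than the continuous ranges $\Xi_u$, which is handled by the distributivity identity above.
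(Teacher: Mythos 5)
Your proposal is correct and takes essentially the same approach as the paper: apply Lemma~\ref{lem:range-closed} to each of the finitely many ranges $\Xi_u$ with $u \in \str$, and then use the (elementary) fact that an intersection of $\U$-closed subsets of $\Z^d$ is $\U$-closed. The paper leaves this last fact implicit, whereas you supply the one-line verification; otherwise the two arguments are identical.
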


\begin{proof}
Recall that, by Lemma~\ref{lem:range-closed}, the set $\Xi_\Z$ is $\U$-closed for every $u \in \str$ and every $(k,u)$-range $\Xi$. Since $T_k(Q)$ is an intersection of $(k,u)$-ranges with $u \in \str$, it follows that $T_k(Q)_{\Z}$ is an intersection of $\U$-closed sets, and therefore is itself $\U$-closed, as required.
\end{proof}

We need one more simple lemma to complete the proof of Proposition~\ref{prop:exists-T}. Let us say that sets $\X,\Y \subset \R^d$ are \emph{strongly disjoint} if $d(\X,\Y) > 2R$, where $R := \max_{x \in X \in \U} \|x\|$. 

\begin{lemma}\label{lem:strongly:disjoint}
Let $1 \le i \le k$, and let $Q \in \QQ_i$ and $Q' \in \QQ_k$, with $Q \ne Q'$. If\/ $T_i(Q)$ is an $(i)$-cover of $Q$ and $T_k(Q')$ is a $(k)$-cover of $Q'$, with
\begin{equation}\label{eq:strongly:disjoint:condition}
T_i(Q) \subset B_{\gamma \Delta_i}(x_Q) \qquad \text{and} \qquad T_k(Q') \subset B_{\gamma \Delta_k}(x_{Q'}),
\end{equation}
then either $T_i(Q) \subset T_k(Q')$, or the sets $T_i(Q)$ and $T_k(Q')$ are strongly disjoint.
\end{lemma}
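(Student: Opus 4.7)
The plan is to use the $(k)$-cover property of $T_k(Q')$ to show that $T_i(Q)$ is separated from $\edge T_k(Q')$ by at least $\gamma\Delta_i > 2R$, and then to use connectedness of $T_i(Q)$ to obtain the stated dichotomy.

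For the separation step, I would first observe that every bad $(i)$-cube $Q''\subset Q$ satisfies $d(Q'',\edge T_k(Q'))\ge 2\gamma\Delta_i$, by Definition~\ref{def:cover} applied to $T_k(Q')$, provided we rule out the exception ``$i=k$ and $Q''\subset Q'$''. When $i<k$ the exception is vacuous; when $i=k$ the hypothesis $Q\ne Q'$ combined with the maximality in Definition~\ref{def:Qk} forces the elements $Q,Q'\in\QQ_k$ to share no bad $(k)$-cube, so the exception still fails. Taking any $x_Q\in Q$ yields $d(x_Q,\edge T_k(Q'))\ge 2\gamma\Delta_i$, and the hypothesis $T_i(Q)\subset B_{\gamma\Delta_i}(x_Q)$ together with the triangle inequality then gives
\[
d\big(T_i(Q),\edge T_k(Q')\big)\;\ge\;2\gamma\Delta_i-\gamma\Delta_i\;=\;\gamma\Delta_i\;>\;2R,
\]
where the last inequality holds for $p$ sufficiently small, since $\Delta_i\ge\Delta_1\to\infty$ as $p\to 0$.

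To conclude, note that $T_i(Q)$ is a bounded open set (intersection of the open ranges $\Xi_u$), and connected, since by Lemma~\ref{lem:pinched-height} the bumps of each $\Sigma_u$ have $L^\infty$ norm much smaller than $\Delta_i$, making $T_i(Q)$ a small perturbation of the open convex polytope $\bigcap_{u\in\str}\{x:\<x-x_Q,u\><3d\Delta_i\}$. Since $T_k(Q')$ is a bounded non-empty open set, the decomposition $\R^d\setminus\edge T_k(Q')=T_k(Q')\sqcup(\R^d\setminus\overline{T_k(Q')})$ is a disjoint union of two open sets, and the separation above places $T_i(Q)$ outside $\edge T_k(Q')$. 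Being connected, $T_i(Q)$ lies entirely in one of these two components: if in $T_k(Q')$, we are done; otherwise $T_i(Q)\cap\overline{T_k(Q')}=\emptyset$, and for any $y\in T_i(Q)$ the nearest point of $T_k(Q')$ lies on $\edge T_k(Q')$, yielding
\[
d\big(T_i(Q),T_k(Q')\big)\;\ge\;d\big(T_i(Q),\edge T_k(Q')\big)\;>\;2R,
\]
so $T_i(Q)$ and $T_k(Q')$ are strongly disjoint.

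The main obstacle I anticipate is the precise justification of the connectedness of $T_i(Q)$: intuitively clear from the smallness of the bumps relative to the convex core, it can be made rigorous via a star-shapedness argument with respect to $x_Q$, using the Lipschitz bound $\|\nabla h_u\|\le 2^{10}\gamma\Delta_1^{1-\beta}$ from Lemma~\ref{lem:pinched-height} (which tends to $0$ as $p\to 0$) to ensure that along any segment from $x_Q$ each constraint $\<x-x_Q,u\><3d\Delta_i+h_u^{\mathrm{bump}}(x_\perp)$ remains satisfied throughout. The remaining steps are then routine triangle-inequality manipulations.
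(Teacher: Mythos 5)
Your proof is correct and, for the case $i<k$, follows the paper's argument essentially verbatim: pick the bad $(i)$-cube containing $x_Q$, invoke the $(k)$-cover property of $T_k(Q')$ to get $d(x_Q,\edge T_k(Q'))\ge 2\gamma\Delta_i$, subtract $\gamma\Delta_i$ for the radius of $T_i(Q)$, and read off the dichotomy. Where you differ is in the case $i=k$: the paper treats it separately and more directly, showing that distinct $Q,Q'\in\QQ_k$ satisfy $d(Q,Q')\ge g_k/2$ (by taking non-adjacent $Q_1\subset Q$, $Q_2\subset Q'$ and using the good $(k+1)$-cube meeting $Q$ together with Definition~\ref{def:goodcubes}), and then concluding $d(T_k(Q),T_k(Q'))\ge g_k/2-2\gamma\Delta_k>2R$ outright, i.e.\ strong disjointness with no nesting alternative. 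You instead fold $i=k$ into the same Definition~\ref{def:cover} argument by claiming that $Q$ and $Q'$ share no bad $(k)$-cube. That claim is true, but be careful with the attribution: maximality alone does not rule out two distinct maximal pairwise-adjacent clusters sharing a cube (think of three intervals $A,B,C$ on a line with $A$--$B$ and $B$--$C$ adjacent but $A$--$C$ not: $\{A,B\}$ and $\{B,C\}$ are both maximal and overlap). What rules it out here is precisely the other half of Definition~\ref{def:Qk}, namely that $Q$ meets a good $(k+1)$-cube; together with maximality and Definition~\ref{def:goodcubes} this forces any bad $(k)$-cube outside $Q$ to lie at distance $\ge g_k/2$ from $Q$, which is exactly the ingredient the paper exploits. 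So the two proofs use the same input, packaged slightly differently. Finally, your explicit treatment of the topological step (connectedness of $T_i(Q)$ via approximate star-shapedness, and the component argument in $\R^d\setminus\edge T_k(Q')$) is sound and makes rigorous what the paper leaves as ``it follows that either $T_i(Q)\subset T_k(Q')$ or they are strongly disjoint''; the Lipschitz bound $\|\nabla h_j\|\le 2^{10}\gamma\Delta_1^{1-\beta}\to 0$ from Lemma~\ref{lem:pinched-height} is indeed the right tool for the star-shapedness check.
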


\begin{proof}
We consider the cases $i = k$ and $i < k$ separately. If $i = k$, then let $Q_1 \subset Q$ and $Q_2 \subset Q'$ be non-adjacent bad $(k)$-cubes, which exist by Definition~\ref{def:Qk}, since $Q \ne Q'$. Now, let $Q^*$ be a good $(k+1)$-cube intersecting $Q$. If $d(Q,Q') \le g_k/2$, then
$$\max\big\{ d(Q^*,Q_1), d(Q^*,Q_2) \big\} \le d(Q,Q') + 4\sqrt{d} \cdot \Delta_k \le g_k,$$
since $Q$ and $Q'$ each have diameter at most $2\sqrt{d} \cdot \Delta_k$. By Definition~\ref{def:goodcubes} this contradicts our assumption that $Q^*$ is good, and therefore $d(Q,Q') \ge g_k/2$. It follows, by~\eqref{def:Dk:gk} and~\eqref{eq:strongly:disjoint:condition}, that 
$$d\big( T_k(Q), T_k(Q') \big) \ge d(x_Q,x_Q') - 2\gamma \Delta_k \ge g_k/2 - 2\gamma \Delta_k > 2R,$$
and hence the sets $T_k(Q)$ and $T_k(Q')$ are strongly disjoint.

On the other hand, if $i < k$, then let $Q_1 \subset Q$ be the bad $(i)$-cube containing $x_Q$. Recall from Definition~\ref{def:cover} that, since $T_k(Q')$ is a $(k)$-cover of $Q'$, we have
$$d\big( x_Q,\edge T_k(Q') \big) \ge d\big( Q_1,\edge T_k(Q') \big) \ge 2\gamma \cdot \Delta_i.$$ 
Now, since $T_i(Q) \subset B_{\gamma \Delta_i}(x_Q)$, by~\eqref{eq:strongly:disjoint:condition}, it follows that 
$$d\big( T_i(Q), \edge T_k(Q') \big) \ge d\big( x_Q,\edge T_k(Q') \big) - \gamma \cdot \Delta_i \ge \gamma \cdot \Delta_i > 2R,$$
and hence either $T_i(Q) \subset T_k(Q')$, or the sets $T_k(Q)$ and $T_k(Q')$ are strongly disjoint, as required.
\end{proof}

We are finally ready to prove Proposition~\ref{prop:exists-T}. 

\begin{proof}[Proof of Proposition~\ref{prop:exists-T}]
For each $k \ge 1$ and each $Q \in \QQ_k(A)$, let $T_k(Q)$ be the $(k)$-cover of $Q$ given by Lemma~\ref{lem:covers-exist}, so $Q \subset T_k(Q) \subset B_{\gamma \Delta_k}(x_Q)$. By Lemma~\ref{lem:covers-props}, the set $T_k(Q)_{\Z}$ is $\U$-closed for each $Q \in \QQ_k$, and by Lemma~\ref{lem:strongly:disjoint}, for each $Q \in \QQ_i$ and $Q' \in \QQ_k$ the sets $T_i(Q)_\Z$ and $T_k(Q')_\Z$ are either nested or strongly disjoint. Defining $\T$ as in~\eqref{def:T}, it follows that $\T_\Z$ is $\U$-closed, as required.
\end{proof}

\section{The update families with $p_c(\Z^d,\U) = 1$}\label{sec:pc=1}

In this final section we shall show how to deduce the following theorem from a technical lemma that was proved in~\cite{BBMSupper}. 

\begin{theorem}\label{thm:pc=1}
Let $\U$ be a $d$-dimensional update family. Then
\[
p_c(\Z^d,\U) = 1 \qquad \Leftrightarrow \qquad \S(\U) = \SS^{d-1}.
\]
\end{theorem}

In order to avoid repetition, let us fix a $d$-dimensional update family $\U$ for the rest of the section. We begin with the easier of the two implications in the theorem, which is dealt with in the following lemma. 

\begin{lemma}\label{lem:pc=1-easy}
If\/ $\S(\U) \neq \SS^{d-1}$, then\/ $p_c(\Z^d,\U) < 1$.
\end{lemma}

\begin{proof}
Let $\Lambda$ be the graph on $\Z^d$ with edges between all pairs of sites at $\ell_\infty$ distance at most 1. It is easy to see by a standard argument that $q_c(d)$, the critical probability for percolation in $\Lambda$, is strictly positive. Indeed, if $X_n$ denotes the number of paths of open sites of length $n$ starting at the origin, where each site is open independently with probability $q$, then $\Ex[X_n] \le 3^{dn} q^n$. Hence, if $q$ is sufficiently small, then with probability $1$ there is no infinite component of open sites. Now recall that $R = \max_{x \in X \in \U} \|x\|$, and choose $p$ such that 
$$1 - q_c(d) < p^{(2R)^d} < 1.$$ 
We claim that $\P_p\big( [A]_\U = \Z^d \big) = 1$, and hence that $p_c(\Z^d,\U) \le p < 1$. 

To prove this, we tile $\Z^d$ with boxes of the form $\big(x + [0,2R)^d\big) \cap \Z^d$, and say that each box is `complete' if it is entirely contained in $A$, and `incomplete' otherwise.  By coupling with site percolation on $\Lambda$, we see that, with probability 1, every connected component of incomplete hypercubes is finite. Moreover, each site in such a connected component $C$ lies at distance at least $2R$ from any uninfected site in a different component. 

Now, let $u \in \SS^{d-1} \setminus \S(\U)$, and let $X \in \U$ be such that $X \subset \H_u$. Choose $y \in C$ with $\<y,u\>$ minimal, and observe that $y + X \subset A$, since $\<x,u\> < 0$ and $\|x\| \le R$ for every $x \in X$. Continuing in this way, we may infect (one by one) each of the sites in $C$, in increasing order of their inner product with $u$.
\end{proof}

The proof of the reverse implication hinges on the following deterministic lemma, which says that the $\U$-closure of the complement of a sufficiently large ball is not the whole of $\Z^d$. Recall that we defined $B_\lambda(x)$ in~\eqref{def:Br} to be the Euclidean ball of radius $\lambda$ centred at $x$.

\begin{lemma}\label{lem:pc=1-big-ball}
If\/ $\S(\U) = \SS^{d-1}$, then
\[
\big[ \Z^d \setminus B_\lambda(\0) \big]_\U \neq \Z^d
\]
for all sufficiently large $\lambda > 0$.
\end{lemma}

In order to prove this lemma, we shall use a construction from~\cite{BBMSupper} of a certain set $\QQ \subset  \SS^{d-1}$, which is called the set of `quasistable directions'. More precisely, we shall define a polytope
\[
P := \bigcap_{u \in \QQ} \big\{ x \in \R^d \,:\, \<x,u\> \leq 1 \big\},
\]
and show that $\lambda \cdot P$ cannot be invaded from outside in the $\U$-bootstrap process if $\lambda$ is sufficiently large. We state here only the properties of $\QQ$ that we need in order to prove Lemma~\ref{lem:pc=1-big-ball}, and refer the reader to Sections~3.3 and~6 of~\cite{BBMSupper} for further details. 

In order to state the two properties of $\QQ$ that we require, we need to define the following graph, which encodes which pairs of faces of $P$ are adjacent. 

\begin{definition}\label{def:voronoi}
Given a finite set $\QQ \subset \SS^{d-1}$ and $u \in \QQ$, the \emph{Voronoi cell} of $u$ with respect to $\QQ$ is 
$$\Cell_\QQ(u) := \big\{ w \in \SS^{d-1} : \<u,w\> \ge \<v,w\> \, \text{ for all } \, v \in \QQ \big\}.$$
The \emph{Voronoi graph} $\Vor(\QQ)$ has vertex set $\QQ$ and edge set 
$$E\big(\Vor(\QQ)\big) := \big\{ uv : \Cell_\QQ(u) \cap \Cell_\QQ(v) \ne \emptyset \big\}.$$
\end{definition}

It is not difficult to show (see~\cite[Section~8]{BBMSupper}) that if the face of $P$ corresponding to\footnote{The face of $P$ corresponding to $W$ is the set $\{ x \in P : \<x,u\> = 1 \text{ for each } u \in W \}$.} a set $W \subset \QQ$ is non-empty, then $W$ is a clique in $\Vor(\QQ)$.

Having defined the Voronoi graph, we can now state the following lemma from~\cite{BBMSupper}, which says that a suitable set of quasistable directions exists. The lemma is proved in~\cite[Section~6]{BBMSupper}; more precisely, it follows from~\cite[Lemmas~6.2 and~6.4]{BBMSupper}.

\begin{lemma}\label{lem:quasi-innerprod}
There exists a finite set $\QQ \subset \SS^{d-1}$, intersecting every open hemisphere of $\SS^{d-1}$, such that if $uv \in E\big( \Vor(\QQ) \big)$, then there does not exist $x \in X \in \U$ such that
\begin{equation}\label{eq:quasi-innerprod}
\< u,x \> < 0 \qquad \text{and} \qquad \< v,x \> > 0.
\end{equation}
\end{lemma}

We also need the following lemma, which is a particular case of~\cite[Lemma 9.8]{BBMSupper}. The proof, which is relatively straightforward, is given in~\cite[Appendix~B]{BBMSupper}.

\begin{lemma}\label{lem:close-to-many-faces}
There exists $\delta  = \delta(\QQ) > 0$ such that the following holds. Let $W \subset \QQ$, and suppose that there exists $x \in P$ with
$$\< x,u \> \ge 1 - \delta$$
for every $u \in W$. Then $W$ is a clique in $\Vor(\QQ)$.
\end{lemma}

We are now ready to prove Lemma~\ref{lem:pc=1-big-ball}.

\begin{proof}[Proof of Lemma~\ref{lem:pc=1-big-ball}]
In order to prove the lemma, it is enough to show that if $\lambda > 0$ is sufficiently large and $D := (\lambda \cdot P) \cap \Z^d$, then $\Z^d \setminus D$ is $\U$-closed. 

Suppose therefore that $\Z^d \setminus D$ is not $\U$-closed, and let $x \in D$ and $X \in \U$ be such that $(x + X) \cap D = \emptyset$. This implies that, for each $y \in X$, there exists $u \in \QQ$ such that $\<x + y, u\> > \lambda$. Let $W$ be the set of all such $u$; that is,
\[
W := \bigcup_{y \in X} \big\{ u \in \QQ \,:\, \< x + y, u \> > \lambda \big\}.
\]
Now, if $u \in W$, then $\<x,u\> \ge \lambda - R \ge (1 - \delta)\lambda$, since $\|y\| \leq R$ for every $y \in X$ and $\lambda$ was chosen sufficiently large. By Lemma~\ref{lem:close-to-many-faces}, it follows that $W$ is a clique in $\Vor(\QQ)$.

To complete the proof, we claim that
\begin{equation}\label{eq:pc=1-clm}
\< y, u^* \> > 0
\end{equation}
for all $y \in X$, where $u^* := \sum_{u \in W} u$. This will then imply that $X \subset \H_{-u^*}^d$, and hence that $-u^* \notin \S(\U)$, contradicting our assumption that $\S(\U) = \SS^{d-1}$. To prove~\eqref{eq:pc=1-clm}, fix $y \in X$, and recall that there exists $v \in W$ such that $\<x+y,v\> > \lambda$, and therefore $\<y,v\> > 0$, since $x \in D \subset \lambda \cdot P$. Since $W$ is a clique in $\Vor(\QQ)$, it follows by Lemma~\ref{lem:quasi-innerprod} that $\<y,u\> \ge 0$ for all $u \in W$. Since we also have $\<y,v\> > 0$, we obtain~\eqref{eq:pc=1-clm}, as required.
\end{proof}

We can now prove the following lemma which, together with Lemma~\ref{lem:pc=1-easy}, implies Theorem~\ref{thm:pc=1}. 

\pagebreak

\begin{lemma}\label{lem:pc=1-hard}
If\/ $\S(\U) = \SS^{d-1}$, then $p_c(\Z^d,\U) = 1$.
\end{lemma}

\begin{proof}
The lemma is an almost immediate consequence of Lemma~\ref{lem:pc=1-big-ball}. Indeed, if $p < 1$ and $A$ is a $p$-random subset of $\Z^d$, then with probability~1 the set $\Z^d \setminus A$ contains a translate of $B_\lambda(\0) \cap \Z^d$ for every $\lambda > 0$. By Lemma~\ref{lem:pc=1-big-ball}, it follows that $A$ almost surely fails to percolate, as required. 
\end{proof}

\section{Acknowledgements}

We are grateful to Ivailo Hartarsky and R\'eka Szab\'o for letting us know about their alternative proof of Theorem~\ref{thm:subcrit}. 

\bibliographystyle{amsplain}
\bibliography{bprefs}

\end{document}